\documentclass{amsart}
\usepackage[utf8]{inputenc}

\usepackage{amsmath,amsthm,verbatim,amssymb,amsfonts,amscd,graphicx,calc}
\usepackage{xcolor}
\usepackage{graphics}

\usepackage{euscript, enumerate,hhline,array}
\usepackage{url,hyperref}

\newcommand{\p}{\partial}

\newcommand{\la}{\langle}
\newcommand{\ra}{\rangle}

\newcommand{\e}{\varepsilon}

\newcommand{\eps}{\varepsilon}

\newcommand{\be}{\begin{equation}}
\newcommand{\ba}{\begin{aligned}}
\newcommand{\bee}{\begin{equation*}}
\newcommand{\ee}{\end{equation}}
\newcommand{\ea}{\end{aligned}}
\newcommand{\eee}{\end{equation*}}
\newcommand{\bea}{\begin{equation} \begin{aligned} }
\newcommand{\eea}{\end{aligned}\end{equation} }

%%%%%%%%New Command Only in This Article%%%%%%%%%%%

\newcommand{\dist}{{\rm dist}\, }

%%%%%%%%%%%%%%%%%%%%%%%%%%%%%%%%%%%%

\theoremstyle{plain}
\newtheorem{theorem}{Theorem}[section]

\newtheorem{corollary}[theorem]{Corollary}

\newtheorem{lemma}[theorem]{Lemma}
\newtheorem{proposition}[theorem]{Proposition}

\theoremstyle{remark}

\newtheorem{remark}[theorem]{Remark}

\theoremstyle{definition}
\newtheorem{definition}[theorem]{Definition}

\numberwithin{equation}{section}

\title{Monge--Amp\`ere equations with right-hand sides of polynomial growth}

\author{Beomjun Choi}

\address{BC: Department of Mathematics, POSTECH, 77 Cheongam-ro, Nam-gu, Pohang, Gyeongbuk 37673, Republic of Korea}
\email{bchoi@postech.ac.kr}
%\address{BC: Department of Mathematics, University of Toronto, 40 St. George Street, Toronto, Ontario, Canada M5S 2E4}
%\email{beomjun.choi@utoronto.ca}

\author{Kyeongsu Choi}
\address{KC: School of Mathematics, Korea Institute for Advanced Study, 85 Hoegiro, Dongdaemun-gu, Seoul 02455, Republic of Korea}
\email{choiks@kias.re.kr}

\author{Soojung Kim}
\address{SK: Department of Mathematics, Soongsil University,  Seoul 06978,    Republic of Korea}
\email{soojungkim@ssu.ac.kr}

\date{}

\begin{document}

\begin{abstract} We study the regularity and the growth rates of solutions to two-dimensional Monge--Amp\`ere equations with the right-hand side exhibiting polynomial growth. Utilizing this analysis, we demonstrate that the translators for the flow by sub-affine-critical powers of the Gauss curvature are smooth, strictly convex entire graphs. These graphs exhibit specific growth rates that depend solely on the power of the flow.
\end{abstract}
 
\maketitle

\section{Introduction}

A one-parameter family of smooth embeddings $X:M^2 \times [0,T)\to \mathbb{R}^3$ with complete convex images $\Sigma_t :=X(M^2,t)$ is a solution to the $\alpha$-Gauss curvature flow  ($\alpha$-GCF) if  $X_t=-K^\alpha \nu$ holds, where $K$ and $\nu$ denote the Gauss curvature and  the unit normal vector pointing the outside of the convex hull of $\Sigma_t$, respectively.  A complete convex hypersurface $ \Sigma\subset \mathbb{R}^3$ is said to be  a translating soliton for  the  $\alpha$-GCF travelling with the velocity $  \omega\in\mathbb{R}^3$ if
 \begin{equation}\label{eq-translator}   
  K^\alpha= \la  \omega , -\nu \ra.
 \end{equation}    
In this paper,  we will assume  $\omega=\bf{e_3}$, namely translators are moving in  the $+x_3$ direction with  the  unit speed.  Then, each translator $\Sigma $   can be represented   as  
 \be\label{eq-repre-translator-graph}
\Sigma = \p \{ (x_1,x_2,x_3)\in \Omega\times  \mathbb{R}:  x_3 >u(x_1,x_2) \}
\ee 
for a convex function $u$ defined on a convex   domain  $\Omega\subset \mathbb{R}^2$.  Note that    $\Sigma $  is    the union of the graph of $u$ over $\Omega $ and  some part of  $\partial \Omega\times   \mathbb{R}  $.   If  $\Sigma$  is smooth, then 
$u$ is a classical solution to the following Monge--Amp\`ere equation
 \bea \label{eq-translatorgraph}
 \det D^2 u = (1+|Du|^2)^{2-\frac1{2\alpha}}, \eea 
with the boundary condition that $|Du|\to +\infty$ as $x\to \partial \Omega$.

It is well-known that for smooth convex closed initial hypersurface, the $\alpha$-GCF with $\alpha>\frac{1}{4}$ converges to a round sphere after rescaling and the $\frac{1}{4}$-GCF converges to an ellipsoid after rescaling. See \cite{andrews1999gauss,andrews2011surfaces,andrews2016flow,brendle2017asymptotic,guan2017entropy}. However, a closed $\alpha$-GCF with $\alpha \in (0,\frac{1}{4})$ generically develops a type II singularity. Moreover, translators are typical models of type II singularities. Suppose that a $\alpha$-GCF $\Sigma_t$ develops a type II singularity $(x_0,t_0)$, and a sequence of rescaled flows $\Sigma^i_t:=\lambda_i(\Sigma_{t_0+\lambda_i^2t}-x_0)$, with $\lambda_i\to +\infty$, converges to a translating flow $\overline{\Sigma}_t:=\Sigma+t\omega$. If there is no compactness result with uniform estimates for $\Sigma^i_t$, then it would converge in a weak sense. Hence, it is worth to study the regularity of the limit translator $\overline{\Sigma}$ defined in a weak solution.

 In this paper, we not only show that a weak solution to \eqref{eq-repre-translator-graph} with $\alpha \in (0,\frac{1}{4})$ is smooth, but also reveals that it is an entire function increasing with the polynomial growth rate $|x|^{\frac{1}{1-2\alpha}}$.

 \begin{theorem}[Growth rate]\label{thm-smooth}
Let $\Sigma $ be a translator under the $\alpha$-GCF in the geometric Alexandrov sense\footnote{See Definition \ref{def-translatoralexandrov}} for $\alpha \in (0,1/4)$. 
 Then, $\Sigma$ is the   graph of an entire smooth strictly convex function $u: \mathbb{R}^2\to\mathbb{R}$. Moreover, there is  a constant $C>1$ depending only on $\alpha$ such that 
\begin{equation}\label{eq-440}
C^{-1}\le  \liminf_{|x|\to \infty}  |x|^{-\frac{1}{1-2\alpha}}u(x) \le\limsup_{|x|\to \infty}  |x|^{-\frac{1}{1-2\alpha}}u(x)  \le C     
\end{equation} 
holds in $\mathbb{R}^2$.
\end{theorem}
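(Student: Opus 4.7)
The plan is to combine two ingredients: the general regularity and growth theory for two-dimensional Monge--Amp\`ere equations with right-hand sides of polynomial growth developed in the preceding sections, and explicit radial translator barriers for the $\alpha$-GCF with $\alpha \in (0,1/4)$. The target exponent is forced by scaling: if $u(x) \sim c|x|^p$, balancing $\det D^2 u \sim r^{2(p-2)}$ against $(1+|Du|^2)^{2-1/(2\alpha)} \sim r^{(2p-2)(2-1/(2\alpha))}$ gives $p = 1/(1-2\alpha)$. Accordingly, the first step is to construct rotationally symmetric translators $u_\alpha^{\mathrm{rad}}(x) = \phi(|x|)$ by reducing \eqref{eq-translatorgraph} to a first-order ODE for $\phi'$, and to check that $\phi(r) \asymp r^{1/(1-2\alpha)}$ as $r \to \infty$. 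These serve as the upper and lower barriers in the subsequent comparison arguments.

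Next, I would address smoothness, strict convexity, and entireness of the domain. By the Alexandrov definition, $\Sigma$ is the sub-graph boundary of a convex function $u:\Omega \to \mathbb{R}$ with $|Du| \to \infty$ on $\partial\Omega$. For $\alpha < 1/4$ one has $2-1/(2\alpha) < 0$, so the right-hand side of \eqref{eq-translatorgraph} decays as $|Du| \to \infty$. Feeding this into the paper's earlier regularity theory for Monge--Amp\`ere equations with polynomial right-hand sides yields $\Omega = \mathbb{R}^2$ together with strict convexity and smoothness of $u$: on each sublevel set $\{|Du| \le M\}$ the right-hand side is pinched between positive constants, so Caffarelli's classical theory delivers strict convexity and $C^{2,\beta}$ estimates there, and the polynomial-decay framework of the preceding sections propagates these estimates into the region of large gradient.

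For the upper growth bound, the idea is to position the radial translator from the first step so that its minimum coincides with that of $u$, and then compare. Strict convexity of $u$ together with the Alexandrov comparison principle, applied to $u - u_\alpha^{\mathrm{rad}}$ on large balls, yields $u \le u_\alpha^{\mathrm{rad}} + C$ outside a compact set, which is the $\limsup$ half of \eqref{eq-440}. For the lower bound, one exploits the identity $|Du(B_R)| = \int_{B_R} \det D^2 u \, dx$ in tandem with the just-obtained upper bound on $u$ (and hence, via convexity, on $|Du|$ over $B_R$) and the equation itself, which forces an integral lower bound on $\det D^2 u$; the earlier polynomial-growth estimates then convert this integral statement into the pointwise $\liminf$ bound.

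The main obstacle I anticipate is the lower growth bound. A translator need not be rotationally symmetric, so a direct radial sub-barrier from below is unavailable; the sharp two-sided comparison between the size of $u$ and that of $|Du|$ must come from the non-radial polynomial-growth analysis developed earlier in the paper. Smoothness and strict convexity for weak Alexandrov translators likewise rely on those general PDE results, so that Theorem~\ref{thm-smooth} ultimately amounts to feeding the translator equation \eqref{eq-translatorgraph} into the paper's main Monge--Amp\`ere theorems and using the explicit radial barriers to pin down the constants.
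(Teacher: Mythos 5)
Your scaling heuristic correctly identifies the exponent $1/(1-2\alpha)$, and you have the right instinct that this theorem should follow by specializing the paper's general Monge--Amp\`ere analysis. However, the proposal has a genuine gap at its core: you propose to ``feed the translator equation \eqref{eq-translatorgraph} into the paper's main Monge--Amp\`ere theorems,'' but Theorem~\ref{thm-equalgrowth} is stated for equations of the form $\det D^2 v = f(x)$ where $f(x)\,dx$ is a \emph{doubling measure depending on position}, with the growth condition \eqref{eq-RHS.conv2} expressed in terms of $|x|$. The right-hand side of \eqref{eq-translatorgraph} is $(1+|Du|^2)^{2-1/(2\alpha)}$, a function of the \emph{gradient}, and so the hypotheses of the paper's regularity and growth theorems simply do not apply to $u$ directly. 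The missing idea is the Legendre transform: the paper passes to the dual $u^*$, which satisfies $\det D^2 u^* = (1+|y|^2)^{1/(2\alpha)-2}$ with right-hand side now a function of position only; Theorem~\ref{thm-equalgrowth} (via Theorem~\ref{thm-controleccen} and the Daskalopoulos--Savin eccentricity analysis) then gives $u^*(y) \sim |y|^{1/(2\alpha)}$, and the growth rate $|x|^{1/(1-2\alpha)}$ for $u = (u^*)^*$ follows from the conjugacy of the exponents $1/(2\alpha)$ and $1/(1-2\alpha)$. Entireness of $\Omega$, smoothness, and strict convexity are likewise obtained through the dual: $u^*$ is entire because sub-level sets of $u^*$ are shown to be compactly contained in $\mathrm{int}(\Omega^*)$ (Theorem~\ref{thm-section}), $u^*$ is smooth and strictly convex by interior regularity for Monge--Amp\`ere on each $Z_l = \{u^* < l\}$, and then so is $u = (u^*)^*$. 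None of this preparatory duality work (Propositions~\ref{prop-C4}, \ref{prop-C51}, Theorem~\ref{thm-section}, Lemma~\ref{lem-dualsetup}) appears in your outline.

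Your proposed alternative, comparison with explicit radial translators, also runs into trouble: a maximum-principle comparison between two solutions of $\det D^2 u = (1+|Du|^2)^{2-1/(2\alpha)}$ is delicate because at a touching point the gradients agree, so the gradient-dependent right-hand sides coincide and no immediate contradiction ensues; the comparison principle for Monge--Amp\`ere in Alexandrov form requires the right-hand side to be fixed (or monotone in $u$), not to depend on $Du$. Likewise, the assertion that the boundary condition $|Du| \to \infty$ on $\partial\Omega$ is available from the Alexandrov definition is not established in the paper for weak translators (it is stated only for smooth ones), and the claim that the right-hand side being ``pinched between positive constants on $\{|Du| \le M\}$'' suffices to invoke Caffarelli's interior regularity ignores that one must first establish compact containment of sections, which is exactly the content of Theorem~\ref{thm-section} on the dual side. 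So while the barrier-plus-comparison strategy is not unreasonable in spirit, as written it is circular (it presupposes the polynomial-growth control it is meant to prove) and skips the one step that actually makes the paper's machinery applicable.
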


\bigskip

To prove this theorem, we work with the dual equation via the Legendre transform. Utilizing the results in \cite{DSavin}, we show that the Legendre dual function  exhibits the homogeneous growth rate $u^*(y)\sim |y|^{\frac{1}{2\alpha}}$.      
For     a convex function $u$ over a convex domain  $\Omega\subset \mathbb{R}^2$,    the Legendre transform  $ u^*: \mathbb{R}^2\to \mathbb{R}\cup\{
+\infty\}$  of $u$  is defined  by 
\bea   u^*(y) := \sup_{x\in \Omega} \,\left\{  \langle y,x\rangle  - u(x)\right\}. \eea 
 If    a  strictly convex  function $u$ solves \eqref{eq-translatorgraph} on $ \Omega$ in the  classical sense,   then the Legendre  transform   $u^*$   of $u$ becomes a classical solution to the dual equation   
  \bea \label{eq-dualtranslator-1} \det D^2 u^* = (1 +|x|^2)^{\frac{1}{2\alpha}-2} \eea
on $ \mathrm{int} (\Omega^*) $,  where   
  \bea \label{eq-Omegadual}\Omega^* = \{y\in \mathbb{R}^2 \,:\, u^*(y)<\infty \}.\eea   
We state the growth rate theorem for solutions to a certain class of Monge--Amp\`ere equations, which is also available for \eqref{eq-dualtranslator-1}.

\bigskip

For the purpose of brevity, from now on we say that a constant is \textit{universal} if it depends only on $\alpha \in (0,\frac{1}{4})$ and the \textit{doubling constant} of the measure $f(x)\,dx$. See Definition \ref{def-doubling} for the doubling constant.

\begin{theorem}\label{thm-equalgrowth}
For a doubling measure $f(x)\, dx$, let $v$ be an Alexandrov solution to 
\begin{equation}\label{eq-standard.MA}
    \det D^2v=f(x),
\end{equation}
on an open set $\mathcal{O}  \subset \mathbb{R}^2$, such that the sub-level set  $\{x\in\mathcal{O}  :v(x) \le t\}$ is  compactly included in $\mathcal{O}$ for every $t\in \mathbb{R}$. For $\alpha\in (0,1/4)$, there are some universal constants $C> 1$ and $\varepsilon \in (0,1)$ with the following significance: if

\begin{equation}\label{eq-RHS.conv2}
\limsup _{x\to \infty} ||x|^{4-\frac{1}{\alpha}}f(x)-1 |\le \varepsilon,
\end{equation}
then $v$ is an entire solution, namely $\mathcal{O} =\mathbb{R}^2$, satisfying
\begin{equation}\label{eq-CR} 
C^{-1}\le  \liminf_{|x|\to \infty} \, |x|^{- \frac 1{2\alpha}}v(x)\, \le \,\limsup_{|x|\to \infty} \, |x|^{- \frac 1{2\alpha}}v(x)  \le C.     
\end{equation} 
\end{theorem}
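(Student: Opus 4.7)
The natural scaling for this problem is $v_R(x) := R^{-1/(2\alpha)} v(Rx)$, under which the equation becomes $\det D^2 v_R(x) = R^{4-1/\alpha} f(Rx)$; hypothesis \eqref{eq-RHS.conv2} ensures that this rescaled right-hand side converges uniformly on compact subsets of $\mathbb{R}^2 \setminus \{0\}$ to the scale-invariant profile $|x|^{1/\alpha - 4}$ as $R \to \infty$. The prototypical solutions of this limit equation are the radial barriers $V_c(x) := c|x|^{1/(2\alpha)}$, which are $C^2$ at the origin since $\beta := 1/(2\alpha) > 2$, and which satisfy $\det D^2 V_c = c^2 \beta^2 (\beta - 1) |x|^{1/\alpha - 4}$. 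The desired two-sided estimate \eqref{eq-CR} then amounts to sandwiching $v$ between $V_{c_-}$ and $V_{c_+}$ for universal constants $c_\pm$.

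I plan to proceed as follows. After translating so that $v$ attains its minimum value $0$ at the origin (well-defined because sub-level sets are compactly contained in $\mathcal{O}$), I would study the structure of the sub-level sets $S_t := \{v \le t\}$ through Caffarelli's theory for Monge--Amp\`ere equations with doubling right-hand sides. Doubling of $f(x)\,dx$ together with the compact containment $S_t \Subset \mathcal{O}$ gives John-ellipsoid control: each $S_t$ is comparable, through an affine transformation $A_t$, to a Euclidean disk, and the normalized function $\tilde v_t := t^{-1} v \circ A_t$ solves a renormalized Monge--Amp\`ere equation with doubling right-hand side on a fixed reference domain. For large $t$, a compactness-rescaling argument shows these normalizations converge subsequentially to a convex solution of $\det D^2 w = |x|^{1/\alpha - 4}$, to which the rigidity results of \cite{DSavin} apply, identifying the limit with a radial $V_c$ up to scaling. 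This forces the eccentricities of $A_t$ to stay uniformly bounded as $t \to \infty$, so $A_t$ is comparable to a homothety of ratio $r(t) := \diam S_t$. The Alexandrov mass identity $\mu_v(S_t)\,|S_t| \sim t^2$ in dimension two, combined with the estimate $\mu_v(S_t) = \int_{S_t} f\,dx \sim r(t)^{1/\alpha - 2}$ produced by \eqref{eq-RHS.conv2}, then balances to give $r(t) \sim t^{2\alpha}$, which is exactly \eqref{eq-CR}. The identity $\mathcal{O} = \mathbb{R}^2$ follows automatically, since the sub-level set diameters diverge to infinity while each $S_t$ remains compactly contained in $\mathcal{O}$, so their union exhausts the plane.

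The main obstacle is ensuring that sections remain uniformly non-degenerate, that is, that the ratio of the two semi-axes of the John ellipsoid of $S_t$ stays bounded as $t \to \infty$. Without such control, eccentric thin sections could form, the scale-invariant structure of the rescaled equation would degenerate, and the Alexandrov mass balance would collapse. Overcoming this requires coupling the rigidity of homogeneous solutions to the limit equation from \cite{DSavin} with the quantitative smallness parameter $\varepsilon$ in \eqref{eq-RHS.conv2}: any subsequential normalized limit must agree with the universal radial profile $V_c$, which is strictly convex with a round John ellipsoid, forcing the eccentricity of $A_t$ to stay close to $1$ for all sufficiently large $t$. Propagating this control to a single $v$ --- uniformly in $t$, rather than merely along subsequences --- and quantitatively tracking the passage from $V_{c_-} \le v \le V_{c_+}$ back through the Legendre duality and the section decomposition, is the delicate technical heart of the argument.
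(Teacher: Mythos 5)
Your identification of the scaling $v_\lambda(x) = \lambda^{-1/(2\alpha)} v(\lambda x)$, the appeal to Caffarelli's balanced-sections theory, and the mass balance $t^2 \sim |S_t|\,\mu_v(S_t)$ are all correct ingredients, and they do appear in the paper's proof. The fatal gap is the step where you extract a blow-up limit and invoke a ``rigidity'' theorem from \cite{DSavin} that would force the limit to be the radial profile $V_c(x) = c|x|^{1/(2\alpha)}$. No such rigidity exists: the homogeneous equation $\det D^2 w = |x|^{1/\alpha-4}$ admits entire, strictly convex, homogeneous solutions that are genuinely non-radial, and its eccentric blow-down $\det D^2 w = |x_1|^{1/\alpha-4}$ likewise has a whole family of non-radial entire solutions, classified by Jin--Xiong \cite{jin2014liouville}. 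A subsequential limit of normalized sections therefore need not be round, and you cannot conclude from a compactness argument that the eccentricity of $A_t$ stays bounded.

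What \cite[Section 3]{DSavin} actually supplies is a quantitative \emph{instability of eccentricity}, which the paper packages in Theorem~\ref{thm-controleccen}: if a section $S_1 \sim A_1$ has $|A_1| \ge C_0$, then at a universally smaller level $\tau_0 \in (0,1)$ any $A_{\tau_0}$ with $S_{\tau_0} \sim A_{\tau_0}$ satisfies $|A_{\tau_0}| \ge 2|A_1|$. The paper then argues by contradiction: if arbitrarily high eccentricities occurred at arbitrarily high levels $l_i \to \infty$, iterating this downward doubling by powers of $\tau_0$ would produce sections of unbounded eccentricity confined to a fixed bounded annulus of levels $(\tau_0 l_0, l_0)$, which is impossible because those sections are pinned between two fixed compact convex sets (area bounded below by $|S_{\tau_0 l_0}|$, diameter bounded above by that of $S_{l_0}$). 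This downward-propagation mechanism is the opposite of a rigidity statement, and it also disposes, with no compactness extraction at all, of the ``uniformity in $t$ rather than along subsequences'' issue that you correctly flagged as the delicate heart of the matter but left unaddressed.
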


\begin{remark}
For the Legendre dual translator equation \eqref{eq-dualtranslator-1}, the condition \eqref{eq-RHS.conv2} is satisfied arbitrary $\eps>0$. Also, for $\alpha \in (0,1/4)$, $(\eta+|x|^2)^{\frac{1}{2\alpha}-2}dx$ for $\eta \in [0,1]$ is a doubling measure and its doubling measure depends only on $\alpha$. Thus, the constant $C$ in \eqref{eq-CR} depends only on $\alpha$.
\end{remark}

% \begin{theorem}[Growth rate of Legendre dual]\label{thm-equalgrowth}
% For $0<\alpha<1/4$, let $v$ be an Alexandrov solution to  the dual equation \eqref{eq-dualtranslator-1} on an open set $\mathcal{O}  \subset \mathbb{R}^2$. Suppose that  the sub-level set  $\{x\in\mathcal{O}  :v(x) \le t\}$ is  compactly included in $\mathcal{O}$ for every $t\in \mathbb{R}$.  
%  Then,   $v$ is entire. i.e. $\mathcal{O} =\mathbb{R}^2$. Moreover,  $v$ has a  following growth rate:
% \begin{equation}\label{eq-CR} 
% C^{-1}\le  \liminf_{|x|\to \infty} \, |x|^{- \frac 1{2\alpha}}v(x)\, \le \,\limsup_{|x|\to \infty} \, |x|^{- \frac 1{2\alpha}}v(x)  \le C \qquad \forall x\in \mathbb{R}^2     
% \end{equation} 
%    with  a uniform  constant $1<C<\infty$   depending only on $\alpha$.    
%    \end{theorem}

\bigskip

Finally, we also provide a quantitative variant of the above theorems, which will play a crucial role in the following paper \cite{choi2021translating} for the classification of translators. The following corollary roughly says if the eccentricity is controlled at a sufficiently high level, then it is also controlled at higher levels in a universal way.

To state the result, we recall the notion of sections. Let  $v:\mathcal{O}\to \mathbb{R}$ be a convex function. For    $x_0\in \mathcal{O} $, $ p\in \p v(x_0)$  and $t>0$,   a section  of $v $ at height $t$ is  denoted by 
 \begin{equation}
  S_{t,x_0,p}^v := \{ x\in\mathcal{O}\,:\,  v(x) \le v(x_0) + p \cdot (x-x_0) +t \}.   
 \end{equation}
 Note that if  $v$ is differentiable at $x_0$, then  $\p v(x_0)$ has only one element.   By abusing notations,     we shall    write  $  S_{t,x_0} $  for $ S_{t,x_0,p}^v $ and $S_{t}=S_{t,0}$ whenever $v$ and $p$ are obvious and the base point is the origin, respectively.

We also denote by 
\begin{equation}
S^v_{t,x_0,p}\sim A,
\end{equation}
if the eccentricity of a section $S^v_{t,x_0,p}$ is proportional to $|A|$ of a matrix $A$ with $\det A=1$. See Definition \ref{def-eccen-section} for the eccentricity and the definition of $\sim$ for a given doubling measure $f(x)\, dx$..

\begin{corollary} \label{cor-quantitativestability}
Let $f(x)\, dx$ be a doubling measure. There are  universal constants $\varepsilon,C_0,C_1$ and a constant $l_0$ depends only on $f,\alpha$ with the following significance. Let $v$ be an entire Alexandrov solution to \eqref{eq-standard.MA}  with $v(0)=0$, $0 \in \partial v(0)$  satisfying \eqref{eq-RHS.conv2} for some $\alpha \in (0,1/4)$. If $S_{l'} \sim A_{l'} $ and $|A_{l'}|\le  M$ for some ${l'}\ge l_0$ and $M\ge C_0$, then $|A_l|\le C_1M$ for all $A_l$ with $S_l \sim A_l$  and $l\ge l'$.
\end{corollary}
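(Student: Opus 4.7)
The plan is to obtain the corollary as a quantitative byproduct of the iterative argument proving Theorem \ref{thm-equalgrowth}. That theorem shows $S_l$ has universally bounded eccentricity for all large $l$, so its proof must contain a self-improving estimate controlling how much the eccentricity can grow in passing from one height to the next once a universal threshold is exceeded. The corollary formalizes this self-improvement with the quantitative input that at some height $l'\ge l_0$ the eccentricity is already $\le M$, with $M$ beyond the universal threshold $C_0$.

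As a first step, I would perform an affine normalization. Set $\tilde v(y):=v(A_{l'}y)$, which is an entire Alexandrov solution to $\det D^2\tilde v(y)=\tilde f(y):=f(A_{l'}y)$ with $\tilde v(0)=0$ and $0\in \partial\tilde v(0)$, since $\det A_{l'}=1$. Sections transform covariantly as $S^{\tilde v}_t=A_{l'}^{-1}S^v_t$, so the hypothesis $S^v_{l'}\sim A_{l'}$ becomes $S^{\tilde v}_{l'}\sim I$, i.e., the level-$l'$ section of $\tilde v$ is comparable to a Euclidean ball. The measure $\tilde f(y)\,dy$ is doubling along sections of $\tilde v$ with the same constant as $f(x)\,dx$ along sections of $v$, because section-doubling is an affinely invariant notion. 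The corollary thus reduces to showing that once $S^{\tilde v}_{l'}$ is normalized, all higher sections $S^{\tilde v}_l$, $l\ge l'$, have universally bounded eccentricity.

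For the rescaled function, the asymptotic condition reads $|A_{l'}y|^{4-1/\alpha}\tilde f(y)\to 1$, so $|y|^{4-1/\alpha}\tilde f(y)$ only lies asymptotically in the bounded range $[M^{-(1/\alpha-4)}(1-\varepsilon),\,M^{1/\alpha-4}(1+\varepsilon)]$, and Theorem \ref{thm-equalgrowth} does not apply to $\tilde v$ as a black box. Instead, I would isolate from its proof the step showing that the eccentricity can only grow by a universal factor per unit height-increment once it has reached the threshold $C_0$, and verify that this step relies only on the doubling of $\tilde f$ (preserved under the unimodular rescaling) and on a bounded asymptotic ratio for $\tilde f$ (which still holds up to the directional distortion), rather than on the sharp asymptotic equality. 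Applying this step iteratively at heights $l\ge l'$ in the rescaled picture yields $|\tilde A_l|\le C$ universal, and unwinding via $S^v_l=A_{l'}S^{\tilde v}_l$ produces $|A_l|\le C\,|A_{l'}|\le C_1 M$.

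The main obstacle is precisely the extraction of this robust self-improvement mechanism from the proof of Theorem \ref{thm-equalgrowth}: one has to track the $M$-dependence through each of its steps in the presence of the directional distortion left by $A_{l'}$, and in particular check that the quantitative bound survives the loss of the sharp limit in \eqref{eq-RHS.conv2}. The universality of $l_0$ (depending only on $f,\alpha$) should then reflect the fact that $l_0$ is simply chosen large enough to place $v$ into the regime where $f$ is already $\varepsilon$-close to $|x|^{1/\alpha-4}$, while the threshold $M\ge C_0$ guarantees that we are past the initial region where the iteration is still contracting eccentricity rather than merely propagating it.
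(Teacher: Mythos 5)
Your proposal correctly identifies that the corollary should fall out of the same self-improving iteration that drives Theorem~\ref{thm-equalgrowth}, but the first concrete step you take --- the affine normalization $\tilde v(y):=v(A_{l'}y)$ --- is a wrong turn, and you acknowledge but do not close the gap it creates. After that normalization the rescaled right-hand side $\tilde f$ is only $\varepsilon$-close to $|A_{l'}y|^{1/\alpha-4}$, not to $|y|^{1/\alpha-4}$, and the discrepancy is $M$-dependent: it ranges over $[M^{-(1/\alpha-4)},M^{1/\alpha-4}]$. Your claim that the key self-improvement step ``relies only on the doubling of $\tilde f$ and on a bounded asymptotic ratio'' is not supported; the estimate actually used (Theorem~\ref{thm-controleccen}, via \cite[Lemmas 3.1--3.2]{DSavin}) needs $\e_0$-closeness to a specific profile, and after the anisotropic renormalization that appears inside that theorem the relevant blow-down is the degenerate profile $|x_1|^{1/\alpha-4}$, not $|y|^{1/\alpha-4}$. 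The smallness there is achieved by the \emph{combination} of $\e_0$ small and $|A_1|\ge C_0$ large; a merely $M$-bounded distortion of the coefficient does not re-enter that regime.

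The paper's proof avoids the $A_{l'}$-normalization entirely. It applies Theorem~\ref{thm-controleccen} directly, iteratively at the discrete levels $\tau_0^{-k}l'$, using the fact that the anisotropic rescaling $v_\lambda(x)=\lambda^{-1/(2\alpha)}v(\lambda x)$ preserves condition~\eqref{eq-RHS.conv2}. The logic is a contrapositive: if $|A_{\tau_0^{-1}l'}|>M\ge C_0$, then Theorem~\ref{thm-controleccen} (rescaled so that the higher level plays the role of level $1$) forces $|A_{l'}|\ge 2|A_{\tau_0^{-1}l'}|>2M$, contradicting $|A_{l'}|\le M$. Inducting gives $|A_{\tau_0^{-k}l'}|\le M$ for all $k$, and the inclusion $S_{\tau_0^{-k'}l'}\subset S_l\subset S_{\tau_0^{-(k'+1)}l'}$ covers intermediate $l$ at the cost of a universal factor $C_1$. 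You should phrase the argument in that form: Theorem~\ref{thm-controleccen} is the ``robust self-improvement mechanism'' you are looking for, already packaged so that you never have to track an $M$-dependent distortion of the coefficient, and the inference runs from the higher level down to the lower one by contradiction rather than forward from $l'$ upward.
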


This follows from an application of results in Savin and Daskalopoulos \cite{DSavin}. In large scales, the dual equation looks like $\det D^2 v= |x|^{\frac{1}{\alpha}-4} $ with $\alpha <\frac{1}{4}$. If the dual $v$ does not have a homogeneous rates, the level sets (curves) will degenerate. After suitable affine transforms, the $v$ will behave like a solution to $\det D^2 v= |x_1|^{\frac{1}{\alpha}-4}$. \cite[Section 3]{DSavin} shows the level curves of every such solution becomes more eccentric as the level gets smaller. Here is a rough idea of our proof: 
when $v$ is globally defined entire solution, this implies that the level curves can not become more eccentric as the level gets higher as otherwise the high eccentricity at high level implies even higher eccentricity at lower levels.

This work is a part of our series of research on the existence, regularity, and uniqueness of translating solitons to the sub-affine-critical powers of the Gauss curvature flow. These questions have been intensively studied for decades, and here we give some references. For the classical equation $\det D^2u=1$ in all dimensions, the convex entire solutions exhibit quadratic growth rates, and moreover, such solutions are necessarily quadratic polynomials \cite{caffarelli1995topics, cheng1986complete, calabi1958improper, jorgens1954losungen, nitsche1956elementary}. See also extensions of this result by Caffarelli--Y. Li \cite{caffarelli2003extension} to the case when the right-hand side is constant outside a compact set, and by Y. Li--S. Lu \cite{caffarelli2004liouville,li2019monge} to the case with a periodic right-hand side.

In the case of translators for the $\alpha$-GCF with $\alpha>1/2$, the questions were thoroughly answered in all dimensions by J. Urbas \cite{Urbas1988, U98GCFsoliton}. The translators in this range of $\alpha$ are graphs over bounded domains, and for each fixed bounded domain, \cite{choi2020uniqueness} showed the translator is the unique complete ancient solution to the flow. The translators also exhibit forward-in-time stability \cite{choi2018convergence, choi2019convergence}.

For $\alpha<1/2$ in all dimensions, H. Jian--X.J. Wang \cite{JW} constructed infinitely many solutions for $\alpha\leq \frac{1}{2}$, and also showed that $\Omega$ should be the entire space $\mathbb{R}^n$ for $\alpha <\frac{1}{n+1}$. Moreover, they showed certain estimates on the growth rates $ |x|^{1+a} \lesssim u^*(x) \lesssim |x|^{1+b}$ for large $|x|$ with some positive constants $a$ and $b$. To our knowledge, however, neither a classification result nor a sharp estimate on the growth rate is known for these cases in any dimension $n\ge2$. It should be noted that for the degenerate equation $\det D^2 u = |x_1|^{\beta }$ on $\mathbb{R}^2$ for $\beta>-1$ (which corresponds to a blow-down equation of our equation when the eccentricity becomes large), the classification of entire solutions was settled by Jin-Xiong in \cite{jin2014liouville}.

%%%%%%%%%%%%%%%%%%%%%%%%%%%%%%%%%%%%%%%%%%%%%%%%%%%%%%%%%%%%%%%%%%%%%%%%%%%%%%%%%%%%%%%%%%%%%%%%%%%%
\section{Preliminary}\label{sec-prelim}
%%%%%%%%%%%%%%%%%%%%%%%%%%%%%%%%%%%%%%%%%%%%%%%%%%%%%%%%%%%%%%%%%%%%%%%%%%%%%%%%%%%%%%%%%%%%%%%%%%%%
 
 In this section, we recall and compare weak solutions to Monge--Amp\`ere equations and $\alpha$-Gauss curvature flow translator equations \eqref{eq-translator}. Also, we show the compactness of level sets of the Legendre dual of weak translators.

 \smallskip

To begin with we recall the definition of  Alexandrov solutions  to the Monge--Amp\`ere equation. See \cite{MA-figalli-book}.  
    
 \begin{definition} [Alexandrov solution] \label{def-alexandrov}
A convex function $u$ on a convex  open  set $\Omega\subset \mathbb{R}^2$ is said to be  a solution to \eqref{eq-translatorgraph} on $\Omega$ in the   Alexandrov sense if for any Borel set $U \subset    \Omega  $, there holds 
\bea \label{eq-C13} \mu_u (U) =  \int_ U (1+|Du|^2)^{2-\frac{1}{2\alpha}}dx ,\eea
where 
\bea \mu_u (U)=  | \p u(U)|\quad\hbox{for every Borel set $U \subset   \Omega$},\eea 
and
\bea \p u (U) = \cup_{x\in U}\p u(x), \eea   
and $\p u(x)$ is  the sub-differential of    $u: \Omega \to \mathbb{R}$ at $x\in \Omega$. Notice that $|Du|$ is well-defined almos everywhere on $\Omega$, as a convex function  $u$ is locally Lipschitz on $\Omega$. Consequently, the right-hand side of \eqref{eq-C13} makes no confusion.
\end{definition}

 Next, let us recall  generalized notions  of   translating solitons for the  $\alpha$-Gauss curvature flow introduced by    Urbas in \cite{U98GCFsoliton}.  

\begin{definition} [Translator] \label{def-translatoralexandrov}
We say that   a   complete     convex hypersurface $\Sigma $ is a translator for the  $\alpha$-Gauss curvature flow in the geometric Alexandrov sense if for any Borel set $E \subset \Sigma$, there holds 
\bea  \label{eq-C10}\mathcal{H}^2 (G(E)) = \int_E (-\nu_{3})^{\frac1 \alpha}.\eea
Here, $\nu= (\nu_1,\nu_2,\nu_3)$ is the outward pointing unit normal vector, which is $\mathcal{H}^2$-a.e. well-defined on $E$.  For a Borel set $E\subset \Sigma$, $G(E)$ is the generalized Gauss image given by \bea G(E) = \cup_{p\in E} G(p) ,\eea where $G(p)$ is the set of outward pointing unit normals of supporting hyperplanes of $\Sigma$ at $p$. 
\end{definition}

\smallskip
 
The following proposition shows the relation between the previous two notions of weak solution.

\begin{proposition} \label{prop-C4}
Let $\Sigma $ be a translator  under  the $\alpha$-GCF  in the     geometric Alexandrov sense represented  by \eqref{eq-repre-translator-graph}.  Then the following statements hold.  
\begin{enumerate}[(a)]
\item $ \mathcal{H}^2\left( G(\Sigma \cap (\p \Omega \times \mathbb{R}))\right)=0 $. i.e. the outward normals supporting on  $\p \Omega \times \mathbb{R}$ are  negligible. \item    $u$  restricted to $  \mathrm{int}( \Omega)$ solves \eqref{eq-translatorgraph} on  $  \mathrm{int}( \Omega)$  in the    Alexandrov sense. 
\end{enumerate}
\end{proposition}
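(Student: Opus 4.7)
For part (a), let $E := \Sigma \cap (\p\Omega \times \mathbb{R})$. Because $\Omega$ is an open convex planar region, $\p\Omega$ is a $1$-dimensional Lipschitz curve and $\p\Omega\times\mathbb{R}$ is a Lipschitz $2$-surface whose tangent plane is vertical at $\mathcal{H}^2$-a.e.\ point. At such a point $\Sigma$ agrees locally with the cylinder, so the outward normal $\nu$ of $\Sigma$ is horizontal and $\nu_3=0$. Plugging this into \eqref{eq-C10} with $E=\Sigma\cap(\p\Omega\times\mathbb{R})$ gives $\mathcal{H}^2(G(E))=0$ at once.

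For part (b), fix a Borel set $U\subset \mathrm{int}(\Omega)$ and let $E_U:=\{(x,u(x)):x\in U\}$ be the graph piece above $U$. Since $u$ is convex, hence locally Lipschitz on $\mathrm{int}(\Omega)$, we have $d\mathcal{H}^2|_{E_U}=\sqrt{1+|Du|^2}\,dx$ a.e., and at each differentiability point the outward normal is $\nu=(Du,-1)/\sqrt{1+|Du|^2}$, so $-\nu_3=(1+|Du|^2)^{-1/2}$. Hence the right side of \eqref{eq-C10} applied to $E_U$ equals
\begin{equation*}
\int_{E_U}(-\nu_3)^{1/\alpha}\,d\mathcal{H}^2 \;=\; \int_U (1+|Du|^2)^{\fr12 - \fr{1}{2\alpha}}\,dx.
\end{equation*}
For the left side, observe that every supporting hyperplane of $\Sigma$ at $(x,u(x))$ with $x\in\mathrm{int}(\Omega)$ is non-vertical, so $G(E_U)=\nu(\p u(U))$, where the map $\nu:p\mapsto (p,-1)/\sqrt{1+|p|^2}$ is a smooth diffeomorphism from $\mathbb{R}^2$ onto the open lower hemisphere of $S^2$ with area Jacobian $(1+|p|^2)^{-3/2}$; therefore
\begin{equation*}
\mathcal{H}^2(G(E_U))\;=\;\int_{\p u(U)}(1+|p|^2)^{-3/2}\,dp.
\end{equation*}
Combining these two identities via \eqref{eq-C10} yields, for every Borel $U\subset\mathrm{int}(\Omega)$,
\begin{equation*}
\int_{\p u(U)}(1+|p|^2)^{-3/2}\,dp \;=\; \int_U (1+|Du|^2)^{\fr12-\fr{1}{2\alpha}}\,dx.
\end{equation*}

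To conclude, I would localize by choosing $U=B_\e(x_0)$, dividing by $|B_\e(x_0)|$, and letting $\e\to 0$. At a.e.\ $x_0\in\mathrm{int}(\Omega)$ where $u$ is differentiable, the set $\p u(B_\e(x_0))$ collapses to $\{Du(x_0)\}$, so the left-hand integrand is uniformly close to $(1+|Du(x_0)|^2)^{-3/2}$, and Lebesgue differentiation produces
\begin{equation*}
\lim_{\e\to 0}\fr{\mu_u(B_\e(x_0))}{|B_\e(x_0)|} \;=\; (1+|Du(x_0)|^2)^{2-\fr{1}{2\alpha}} \qquad\text{for a.e.\ }x_0.
\end{equation*}
This identifies $d\mu_u=(1+|Du|^2)^{2-1/(2\alpha)}\,dx$, which is precisely \eqref{eq-C13}. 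The main technical point is this final step: one must know that $\p u(B_\e(x_0))$ shrinks to $\{Du(x_0)\}$ and that the limiting procedure is uniform enough to pass through the integral. This rests on the standard facts that the subdifferential of a convex function is upper semicontinuous and a.e.\ single-valued; equivalently, one may invoke the change-of-variables formula $\int_{\p u(U)}g(p)\,dp=\int_U g(Du(x))\,d\mu_u(x)$ for nonnegative Borel $g$ (see e.g.\ Figalli's Monge--Amp\`ere monograph) applied with $g(p)=(1+|p|^2)^{-3/2}$.
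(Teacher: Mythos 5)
Your argument is correct and follows essentially the same route as the paper: part (a) is obtained by noting $\nu_3=0$ for $\mathcal H^2$-a.e.\ point of the cylindrical part and plugging into \eqref{eq-C10}; part (b) uses the parametrization $\varphi(p)=(p,-1)/\sqrt{1+|p|^2}$ of the lower hemisphere, its Jacobian $(1+|p|^2)^{-3/2}$, the identity $\varphi(\p u(U))=G(E)$, and a change of variables through \eqref{eq-C10}, exactly as the paper does (you merely reorganize the chain $\mu_u(U)\to\int_{G(E)}\to\int_E\to\int_U$ into first equating two expressions for $\mathcal H^2(G(E_U))$ and then invoking the Monge--Amp\`ere change-of-variables identity $\int_{\p u(U)}g\,dp=\int_U g(Du)\,d\mu_u$). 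The Lebesgue-differentiation shortcut you sketch first is not quite self-contained, since it tacitly needs $\mu_u\ll dx$ before the density can be computed pointwise; the change-of-variables formula you fall back on handles this cleanly and is the right way to finish.
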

\begin{proof} 

(a) The first part is immediate from   Definition \ref{def-translatoralexandrov}. Indeed,  for all point $p\in \Sigma \cap (\p \Omega \times \mathbb{R})$, $G(p)$ contains a normal vector with $\nu _3=0$,  i.e., $\nu_3=0$  $\mathcal{H}^2$-a.e. on $G(\Sigma \cap (\p \Omega \times \mathbb{R}))$.
\smallskip

(b) Next, let $U\subset \mathrm{int}(\Omega) $ be a given Borel set, and let  $E \subset \Sigma$  be a Borel set   such that $U$ is  the projection of $E  $ onto the plane.  
 Note that $\varphi(\p u(U) ) = G(E)$,  where $\varphi:\mathbb{R}^2 \to \mathbb{S}^2 $ 
%$\varphi:\mathbb{R}^2 \to \mathbb{S}^2 $
  is given as  $  \varphi(y)=  \frac{(y,-1)}{(1+|y|^2)^{1/2}}$. By the area formula, we have 
\be
\mu_u (U)= \int_{ {G(E)} } \sum_{q\in \varphi^{-1}\{p\}}[J\varphi(q)]^{-1}  d\mathcal{H}^2(p)= \int_{G(E)} (-\nu_3(p)) ^{-3} d\mathcal{H}^2(p)  
\ee
since $J\varphi(y)=  (1+|y|^2)^{-3/2}$.  Here, we used that $\p u(x)$ is uniquely defined a.e. on $ \Omega$ as $Du(x)$.
%$$$$$  \mu_u (U)=\int_{\p u (U)}d y = \int_{G(E)} (-\nu_3(p)) ^{-3} d\mathcal{H}^2(p)$. 
Meanwhile, it follows from    \eqref{eq-C10} that 
\bea \int_{G(E)} (-\nu_3(p)) ^{-3} d\mathcal{H}^2(p) = \int_E (-\nu_3)^{-3 +\frac 1\alpha} = \int _U     (1+|Du|^2)^{2-\frac1{2\alpha}} dx  \eea
since  the Jacobian of a map $x\mapsto (x,  Du(x))$  is  $(1+|Du|^2)^{1/2}$    a.e. on  $  U$.  
Combining  the estimates above,  we deduce that $ \mu_u (U)= \int_U (1+|Du|^2)^{2-\frac1{2\alpha}} dx$, which finishes the proof.
\end{proof}

\bigskip

Given a translator $\Sigma $ with the graph representation $u$ in \eqref{eq-repre-translator-graph}, we can consider the Legendre dual $u^*$ and the dual domain $\Omega^*$ as in \eqref{eq-dualtranslator-1} and \eqref{eq-Omegadual}.

\begin{proposition}\label{prop-C51}
 Let $\Sigma $ be a translator  under  the $\alpha$-GCF  in the     geometric Alexandrov sense represented  by \eqref{eq-repre-translator-graph}. Then, 
  the   dual function $u^*$ is an Alexandrov solution to the Legendre dual equation $ \det D^2 u^* = (1+|y|^2)^{\frac{1}{2\alpha}-2}$ on $\mathrm{int}( \Omega^*)$.  Namely, it  holds  that
\be\label{eq--C51}
\mu_{u^*}(V) = \int_V (1+|y|^2)^{\frac{1}{2\alpha}-2} dy 
\ee  for any  Borel set $V\subset\mathrm{int}(\Omega^*)$. 
\end{proposition}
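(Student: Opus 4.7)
The plan is to deduce the dual Monge--Amp\`ere identity \eqref{eq--C51} from the primal identity supplied by Proposition \ref{prop-C4}(b) via the standard Legendre correspondence. By Proposition \ref{prop-C4}(b), for every Borel $U\subset\mathrm{int}(\Omega)$,
\[
\mu_u(U) \;=\; \int_U (1+|Du|^2)^{2-\frac{1}{2\alpha}}\,dx \;=\; \int_U \det D^2u\,dx,
\]
so the task reduces to transporting this identity from the $x$-side to the $y$-side through the gradient map $Du$.

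My first step would be to record that $u^*$ is a convex function which is finite and locally Lipschitz on the open set $\mathrm{int}(\Omega^*)$, hence a.e.\ differentiable there, and that the involutive relation $y\in\partial u(x)\iff x\in\partial u^*(y)$ gives
\[
\partial u^*(V) \;=\; \{\,x\in\overline{\Omega} \,:\, \partial u(x)\cap V \neq \emptyset\,\}
\]
for any Borel set $V\subset\mathrm{int}(\Omega^*)$, where $u$ is tacitly extended by $+\infty$ outside $\Omega$. Next, using the positivity of $\det D^2 u$ on $\mathrm{int}(\Omega)$ from Proposition \ref{prop-C4}(b), Aleksandrov's second-differentiability theorem yields that $u$ has a positive-definite Hessian a.e., so $Du$ is locally injective almost everywhere and $Du^*$ inverts $Du$ a.e. Up to Lebesgue-null sets this identifies
\[
\partial u^*(V) \;=\; \{\,x\in\mathrm{int}(\Omega) \,:\, Du(x)\in V\,\}.
\]

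The final step is the change of variables $y=Du(x)$, whose Jacobian is $\det D^2 u$. Combining this with the pointwise identity $(1+|Du(x)|^2)^{2-\frac{1}{2\alpha}}(1+|y|^2)^{\frac{1}{2\alpha}-2}=1$ whenever $y=Du(x)$ gives
\[
\mu_{u^*}(V) \;=\; \bigl|\partial u^*(V)\bigr| \;=\; \int_{\{Du\in V\}} dx \;=\; \int_V \frac{dy}{\det D^2 u\bigl(Du^*(y)\bigr)} \;=\; \int_V (1+|y|^2)^{\frac{1}{2\alpha}-2}\,dy,
\]
which is exactly \eqref{eq--C51}. I expect the main technical obstacle to be careful null-set bookkeeping: one must verify that $Du^*(y)\in\mathrm{int}(\Omega)$ for a.e.\ $y\in V$, so that the translator equation may legitimately be applied at $Du^*(y)$, and that the singular set of $u$ (where $\partial u$ is multi-valued) contributes nothing. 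Both points follow from $V\Subset\mathrm{int}(\Omega^*)$, the local Lipschitz bound on $u^*$ near $V$, and the a.e.\ positivity of $\det D^2 u$ on $\mathrm{int}(\Omega)$.
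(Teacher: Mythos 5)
Your outline captures the right structure — pass the primal Monge--Amp\`ere identity from Proposition \ref{prop-C4}(b) to the dual side via the involution $y\in\partial u(x)\iff x\in\partial u^*(y)$ — and this is the same strategy the paper follows. Two places, however, need more than you've given them, and these are exactly where the paper is doing the real work.

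First, your central display invokes a change-of-variables (area) formula for the gradient map $Du$, writing $\int_{\{Du\in V\}}dx=\int_V(\det D^2u(Du^*(y)))^{-1}\,dy$. The map $Du$ is only a.e.\ approximately differentiable (Aleksandrov's theorem); it is not Lipschitz, and the area formula for such maps is not free. The paper sidesteps this entirely with a layer-cake computation: with $h(y)=(1+|y|^2)^{2-\frac1{2\alpha}}$ it shows $\int_V h\,d\mu_{u^*}=\int_0^\infty\mu_{u^*}(V_s)\,ds=\int_0^\infty|U'_s|\,ds=\int_{U'}h(Du)\,dx$, and then applies Proposition \ref{prop-C4}(b) together with the Aleksandrov definition $\mu_u(U')=|\partial u(U')|=|Du(U')|$ to conclude $\int_Vh\,d\mu_{u^*}=|V'|$. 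Nothing beyond the measure-theoretic definition of $\mu_u$ is used. Second, the null-set bookkeeping you defer to the end is not resolved by the reasons you list. What must be shown is that the \emph{image} set $V\setminus Du(U')$ is Lebesgue-null in $y$-space, and this set is contained in $\partial u(\Omega\setminus\mathrm{int}(\Omega))\cup\partial u(W)$, where $W$ is the non-differentiability set of $u$. Neither piece is negligible for soft reasons: $|\partial u(W)|=\mu_u(W)=0$ precisely because $\mu_u$ is absolutely continuous (Proposition \ref{prop-C4}(b)), and $|\partial u(\Omega\setminus\mathrm{int}(\Omega))|=0$ comes from Proposition \ref{prop-C4}(a), which is where the geometric Alexandrov structure of the translator (negligible Gauss image over $\partial\Omega\times\mathbb{R}$) actually enters. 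The local Lipschitz bound on $u^*$ near $V$ only bounds $Du^*(V)$; it does not force it into $\mathrm{int}(\Omega)$, and ``a.e.\ positivity of $\det D^2u$'' by itself does not rule out a singular contribution from $W$. You also need, as the paper notes, to replace $u$ by its lower-semicontinuous extension so that $\Omega=\Omega^{**}$, in order for the subdifferential involution to hold verbatim. With these repairs your proof matches the paper's.
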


 \begin{proof}   We first recall a preliminary result from the analysis of convex functions.  Let us denote the domain of $u^{**}= (u^*)^*$  by
\begin{equation}
\Omega^{**}:= \{x\in \mathbb{R}^2 \, : \,  \sup_{y\in \Omega^*} \left\{\la x,y  \ra-u^*(y)\right\}<\infty \}.
\end{equation} 
Note that $u^{**}$ is the convex envelope (convexification) of $u$. Since $u$ is convex, it holds that $u=u^{**}$ on $\Omega$. Due to an ambiguity on the choice of $\Omega$,  $\Omega^{**}$ can possibly be slightly larger as $\Omega \subset \Omega^{**}\subset \mathrm{cl}(\Omega)$. We may replace $u$ and $\Omega$ by the lower-semi continuous extension on the maximal domain  in order to make $\Omega=\Omega^{**}$, which  does not change $u^*$ and $\Omega^*$; see \cite[Proposition 5.8]{MR2459454}.

 To prove \eqref{eq--C51},  it suffices to show that  for any  Borel set   $V \subset  \mathrm{int}( \Omega^*)$,  
  \be 
  |V|=  \int_V (1+|y|^2)^{2-\frac 1{2\alpha}} d\mu_{u^*}(y) .
  \ee 
We observe that 
\begin{equation}\label{eq53}
  \int_V (1+|y|^2)^{2-\frac 1{2\alpha}} d\mu_{u^*}(y) = \int_0^\infty  \mu_{u^*}(V_s)ds,
\end{equation} 
 where
\begin{equation}
V_s:=V\cap \{y:\in \text{int}(\Omega^*):  (1+|y|^2)^{2-\frac 1{2\alpha}} \ge s \}.
\end{equation}
Then the integrand can be rewritten as 
\begin{align}
\mu_{u^*}(V_s)= \left| \{x\in \mathrm{int}(\Omega): u \text{ is differentiable at } x, \text{ and } Du(x) \in V_s \}\right|.
\end{align}
Notice that   for $x\in \Omega$ and $y\in \Omega^*$, $ x \in \p u^*(y)$  is equivalent to $ y\in \p u(x)$.  
  Letting 
  \be
  U':=\{ x\in\mathrm{int}(\Omega):\,\text{ $u$ is differentiable at } x,\, D u (x)\in    V   \},
  \ee
and $V':=Du (U')$,
it holds from  \eqref{eq53} and  (b) of Proposition \ref{prop-C4} that 
  \bea  \label{eq53-1}
   \int_V (1+|y|^2)^{2-\frac 1{2\alpha}} d\mu_{u^*}(y)
    &= \int_{ U'} (1+|Du|^2)^{2-\frac{1}{2\alpha}}dx = |V'| .
    \eea      
 Note that  $V'\subset V$ and $ V\setminus V' \subset \p u (\Omega\setminus  \mathrm{int}(\Omega))$ $\cup$ $ \p u( W)$, where $W:=\{ x\in \Omega: \text{$u$ is non-differentiable at $x$}\}$.  So, it remains to prove that $ |\p u (\Omega\setminus  \mathrm{int}(\Omega)) |=0=|\p u ( W) |$. 
 	 We first observe that $ |\p u (\Omega\setminus  \mathrm{int}(\Omega)) |=0$ in light of   (a) of Proposition \ref{prop-C4}. Next,    (b) of Proposition \ref{prop-C4} and \eqref{eq-C13} require that $d\mu_u $ is absolutely continuous with respect to $dx$. Then  we obtain that  $|\p u (W) | =0$ since    $u$ is differentiable a.e. on $\Omega$. This shows that $|V'|=|V|$, which   combined with \eqref{eq53-1} finishes the proof.  
 \end{proof}

\bigskip

In the following theorem, we prove that level sets of the dual $u^*$ are compact. 

\begin{theorem} \label{thm-section}
Let $\Sigma $ be a translator  under  the $\alpha$-GCF  in the     geometric Alexandrov sense represented  by \eqref{eq-repre-translator-graph}. Then the following statements hold.  
 \begin{enumerate}[(a)] 
\item 
If  $\{y_j\}$ is a sequence in $  \mathrm{int}( \Omega^*)$ converging to $y \in \p \Omega^*$,  then  
\be
\lim_{j\to \infty}u^*(y_j)=\infty.
\ee
\item For  any   $y' \in \mathrm{int}(  \Omega^*)$ and  $x'\in \p u^*(y')$ with $x'\in \mathrm{int}(  \Omega)$, the section 
\bea 
S_{t,y',x'}^{u^*}  =\left\{ y\in \Omega^* \,:\, u^* (y) \le u^*(y')+\langle x', y-y'\rangle +  t \right\}
\eea 
is compact and contained in $\mathrm{int}( \Omega^*)$ for all $t>0$.  
\end{enumerate}
 \end{theorem}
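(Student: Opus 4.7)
The plan is to prove (a) by contradiction, by converting a bounded sequence $\{u^*(y_j)\}$ into a geometric obstruction that is incompatible with the Monge--Amp\`ere equation for $u$ (Proposition~\ref{prop-C4}(b)), and then to deduce (b) from (a) via elementary convex analysis plus the positive lower bound on the Monge--Amp\`ere density of $u^*$.

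For (a), suppose toward contradiction that $u^*(y_j) \le M$ along some sequence $y_j \in \mathrm{int}(\Omega^*)$ with $y_j \to y_0 \in \partial \Omega^*$. Lower semi-continuity of $u^*$ then gives $u^*(y_0) \le M < \infty$. Since $\Omega^*$ is convex with nonempty interior and $y_0 \in \partial \Omega^*$, a separating hyperplane argument produces an outward unit direction $\xi$ with $u^*(y_0 + s\xi) = +\infty$ for every $s > 0$, so the one-sided directional derivative $D_\xi^+ u^*(y_0) = +\infty$. Via the identity $D_\xi^+ u^*(y_0) = \sup_{x \in \partial u^*(y_0)} \langle x, \xi\rangle$ for convex $u^*$, the subdifferential $\partial u^*(y_0)$ is either empty or unbounded in the direction $\xi$. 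Choosing $x_j \in \partial u^*(y_j) \subset \Omega$ (which is nonempty since $y_j \in \mathrm{int}(\Omega^*)$), the conjugate relation $y_j \in \partial u(x_j)$ together with this dichotomy forces either $|x_j| \to \infty$ along a subsequence, or the existence of an unbounded half-line in $\partial u^*(y_0) \subset \Omega$ along which $y_0$ is a subgradient of $u$.

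The contradiction is extracted from Proposition~\ref{prop-C4}(b). For $\epsilon > 0$, set
\[
U_\epsilon := \{\, x \in \mathrm{int}(\Omega) \;:\; \partial u(x) \cap B_\epsilon(y_0) \ne \emptyset \,\}.
\]
Since $\partial u(U_\epsilon) \subset B_\epsilon(y_0)$, equation \eqref{eq-C13} yields
\[
\int_{U_\epsilon} (1+|Du|^2)^{2-\frac{1}{2\alpha}}\, dx \;=\; |\partial u(U_\epsilon)| \;\le\; \pi \epsilon^2.
\]
On $U_\epsilon$ we have $|Du| \le |y_0| + \epsilon$, so the integrand is bounded below by $c_\epsilon := (1 + (|y_0|+\epsilon)^2)^{2-1/(2\alpha)} > 0$, giving $|U_\epsilon| \le \pi \epsilon^2 / c_\epsilon < \infty$. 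On the other hand, in either sub-case above ($|x_j|\to\infty$ or an unbounded ray in $\partial u^*(y_0)$), convexity of $u$ together with monotonicity of the subdifferential $\partial u$ promotes the one-dimensional escape direction into a genuine two-dimensional strip contained in $U_\epsilon$, forcing $|U_\epsilon| = \infty$. This contradiction establishes (a).

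For (b), given (a), the section $S := S_{t,y',x'}^{u^*}$ cannot meet $\partial \Omega^*$: if $y^* \in S \cap \partial \Omega^*$ existed, then along the segment $y_s := (1-s) y' + s y^* \in \mathrm{int}(\Omega^*)$ for $s \in [0,1)$, convexity of $u^*$ would give $u^*(y_s) \le (1-s) u^*(y') + s\, u^*(y^*)$, uniformly bounded in $s$, contradicting $u^*(y_s) \to +\infty$ from (a); hence $S \subset \mathrm{int}(\Omega^*)$. Closedness of $S$ follows from lower semi-continuity of the convex function $y \mapsto u^*(y) - \langle x', y\rangle$, and boundedness of $S$ follows from $\det D^2 u^* \ge 1$ on $\mathrm{int}(\Omega^*)$ (since $1/(2\alpha) - 2 > 0$): the Alexandrov estimate applied to $y \mapsto u^*(y) - \langle x', y\rangle$, which attains its minimum at $y'$ because $x' \in \partial u^*(y')$, forces its sub-level sets to have uniformly bounded diameter. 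Thus $S$ is compact and compactly contained in $\mathrm{int}(\Omega^*)$.

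The main obstacle is the geometric promotion step in (a), converting the one-dimensional conclusion (either $|x_j| \to \infty$ or a half-line in $\partial u^*(y_0)$) into a full two-dimensional strip inside $U_\epsilon$ of infinite area. Monotonicity of $\partial u$ directly controls $U_\epsilon$ only in the $y_0$-direction; obtaining orthogonal extent additionally uses the asymptotic near-affineness of $u$ along the escape direction---which itself relies on the positive lower bound for the Monge--Amp\`ere density of $u$ preventing genuine degeneracy on an open set---and some care is needed to treat the two sub-cases uniformly.
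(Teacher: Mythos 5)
Your overall strategy for (a)---derive a contradiction by showing a certain region must have both finite and infinite area---is in the same spirit as the paper's, but your execution has a genuine gap that you have correctly flagged as ``the main obstacle,'' and it is not clear that it can be filled. You want to show $|U_\varepsilon| = \infty$, where $U_\varepsilon = \{x : \partial u(x)\cap B_\varepsilon(y_0)\neq\emptyset\}$; this requires $Du$ to stay within $\varepsilon$ of the single point $y_0$ on a two-dimensional region of infinite area. Your ``promotion'' step from a ray/escaping sequence to a full strip is precisely what cannot be carried out: your own computation shows $|U_\varepsilon|$ is bounded by $\pi\varepsilon^2/c_\varepsilon$ directly from the Monge--Amp\`ere equation, which is a true fact (independent of the contradiction hypothesis), so no valid argument under the hypotheses can yield $|U_\varepsilon| = \infty$. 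The paper instead builds a half-strip $A = \cup_{t>0}(B_{r/2}+ty)$ on which $|Du|$ is merely \emph{bounded} (by some constant, not close to $y_0$); this weaker conclusion is obtained by the two-sided barrier: the subgradient inequality $\langle y,x\rangle - L \le u(x)$ from below, and the floating-disk comparison $u \le M + t|y|^2$ on $B_r + ty$ from above (using that $y\in\partial\Omega^*$ forces the ray $\{p+t(y,|y|^2)\}$ into the convex hull). The contradiction then comes from $\mathcal{H}^2(G(E)) = \int_A (1+|Du|^2)^{\frac{1}{2}-\frac{1}{2\alpha}}dx = \infty$ against the intrinsic bound $\mathcal{H}^2(G(E)) \le |\mathbb{S}^2|/2$. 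This avoids entirely the need for $Du$ to converge to a single value.

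For (b), your reduction of the ``not meeting $\partial\Omega^*$'' part to (a) matches the paper, but your boundedness argument via ``the Alexandrov estimate applied to $u^*-\langle x',\cdot\rangle$ with $\det D^2 u^*\ge 1$'' is not correct as stated: the Alexandrov maximum principle is formulated on bounded convex domains and does not by itself deliver a uniform diameter bound on sub-level sets (one would also have to control the gradient image carefully, and handle the fact that $\Omega^*$ is a priori not all of $\mathbb{R}^2$). The paper's argument is much more elementary and avoids the equation altogether: after translating so that $x'=0\in\mathrm{int}(\Omega)$, pick a small ball $B_\gamma(0)\subset\Omega$; if $|y_j|\to\infty$ with $y_j/|y_j|\to\omega$, then directly from the definition of the Legendre transform, $u^*(y_j)\ge\langle y_j,\gamma\omega\rangle - u(\gamma\omega)\ge \gamma|y_j|/2 - u(\gamma\omega)\to\infty$, ruling out unbounded sub-level sets.
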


\begin{proof}
(a) 
 Suppose to  the contrary that  there is a sequence  $y_j$ in $\mathrm{int}( \Omega^*)$ converging to $ y\in \p \Omega^*$ such that $u^*(y_j)=:L_j \to L $ for some $L\in \mathbb{R}$. 
 (Note that $L$ can not be $-\infty$ due to the    convexity.)  
 Then we obtain that  
\be\label{eq-lem3-barr1}
   \langle y,x\rangle -L=\lim_{j\to \infty }\left\{ \langle y_j,x\rangle -L_j \right\}\le u(x)\quad\hbox{ for  any $x\in \Omega$}. 
   \ee

 On the other hand,  we observe that for any arbitrary point $p$ inside the convex hull of the soliton, the half ray $\{p + t(y,|y|^2)\}_{t\ge 0}$ starting at $p$ should be included inside   the convex hull of the soliton since  $ y\in \p \Omega^*$.  
 After a translation, we may assume that $\Omega$ contains a ball of radius $r>0$ centered at $0$, say $B_r$. 
 Let  $M:= \sup_{x\in B_r} u(x) .$
  By considering the rays starting from a  floating disk $B_r \times \{M\}$ as barriers,   it holds that for each $t>0$,
 \bea\label{eq-lem3-barr2}
  u  \le M+ t|y|^2 \quad \text{    on  each translated disk }    B_r +t y.
  \eea 
 By \eqref{eq-lem3-barr1} and \eqref{eq-lem3-barr2}, there holds a two-sided bound:   for each $t>0$, 
\be
 (t|y|-r )|y| -L \le u (x) \le t|y|^2 +M\quad\hbox{  for  any $x\in   B_r +t y$}. 
\ee 
  Thus there is a constant $C=C(L,M,|y|,r )$ such  that 
\be   |Du | \le C \quad\hbox{  on a half-strip region $ \cup_{t>0}\big(B_{r/2}+ty \big)  $}
.
\ee
Letting    $E \subset \Sigma$  to be  the graph of $u$ over $A:=\cup_{t>0}\big(B_{r/2}+ty \big)$,   we deduce that 
 \bea 
 \mathcal{H}^2  (G(E))=\int_E(- \nu_3)^{\frac 1\alpha}= \int_A (1+|Du|^2)^{\frac 12 -\frac 1{2\alpha} }dx =\infty. \eea
  This is a contradiction as $\mathcal{H}^2  (G(E))$ can not exceed $|\mathbb{S}^2|/2$.  
  \medskip

	(b) %The second assertion will be a consequence of the first assertion. 
	 Let  $\tilde u$ be a translation of $u$ given by  $\tilde u (x) = u(x+x')$.  Then it holds  that  $\tilde u^*(y) = u^*(y) -\langle y,x'\rangle$, and hence 
	we may assume that $x'=0$ and $0\in \mathrm{int}( \Omega)$. Thus  the section $S_{t,y'}^{u^*}$ becomes a sub-level set and it suffices to show that every sub-level set is compactly included in $\mathrm{int}(  \Omega^*)$. 
		\smallskip
		
	Suppose to the contrary that  the second assertion is false. Then   there is a sequence  $\{y_j \}$  in $ \mathrm{int}( \Omega^*)$ such that 
	\be\left\{
	\ba
	&\hbox{either  $y_j \to y\in  \p \Omega^*$\,\, or \,\,$|y_j|\to \infty$}, \\ 
	&\hbox{and $ u^*(y_j) \le C$ for all  $ j\ge 1,$ with some $C>0$. }
	\ea\right.
	\ee
	   By the first assertion (a) of this lemma,  the first case can not happen. i.e.  there is     no sequence  $\{y_j \}$     in $ \mathrm{int}( \Omega^*)$  such that   $y_j \to y\in  \p \Omega^*$ and $ u^*(y_j) \le C$  for all $j\geq1$.  Now, let us  assume that   there is     a sequence  $\{y_j \}$     in $ \mathrm{int}( \Omega^*)$  such that $|y_j|\to \infty$ and  $ u^*(y_j) \le C$  for all  $j\geq1$. Here, we may assume that $y_j/|y_j| \to \omega $ for some $ \omega \in \mathbb{S}^1$  by extracting a subsequence. 
  Then  it holds that   
  \be
  \langle y_{j}/|y_{j}|, \omega \rangle \ge 1/2\quad\hbox{ for     large $j$.}
  \ee
  Since    $B_\gamma(0)   \subset \Omega$ for some $\gamma>0$,  we have  $\gamma \omega  \in \Omega$ and hence it follows that  
	\be
	 u^*(y_j)\ge \langle y_j, \gamma  \omega  \rangle - u(\gamma \omega ) \ge \frac{\gamma |y_j|}{2}-u(\gamma \omega ) \to \infty \quad\hbox{  as $j\to \infty$. }
	 \ee
	 This is a contradiction to the hypothesis that $ u^*(y_j) \le C$ for all $j\geq1$. Thus  we conclude that every sub-level set is compactly included in $\mathrm{int}(  \Omega^*)$.  
This	 finishes the proof. 
\end{proof}

\bigskip

  %%%%%%%%%%%%%%%%%%%%%%%%%%%%%%%%%%%%%%%%%%%%%%%%%%%%%%%%%%%%%%%%%%%%%%%
\section{Regularity and global structure}\label{sec-growth}
%%%%%%%%%%%%%%%%%%%%%%%%%%%%%%%%%%%%%%%%%%%%%%%%%%%%%%%%%%%%%%%%%%%%%%%

This section is  devoted to the proof of  Theorem  \ref{thm-smooth}, concerning strict convexity, smoothness and growth rate of given translator.   
 
   The main task  is to establish  the growth rate  of a  height function $u$ as in \eqref{eq-440}.   Instead of $u$,  we work with its Legendre transform $u^*$   and show  $u^*$ has the growth rate $|x|^{\frac{1}{2\alpha}}$.  The advantages of dual equation 
     \bea 
      \det D^2 u^* = (1+|x|^2)^{\frac{1}{2\alpha}-2} 
      \eea are the fact that the right-hand side  depends only on the position on the domain and it satisfies a doubling  condition; see Definition \ref{def-doubling}.     
 
 The regularity theory  of  Caffarelli   for   the  Monge--Amp\`ere measure  with  the doubling condition  can be applied to \eqref{eq-dualtranslator-1}. In particular,  the theorem of  Caffarelli (Theorem \ref{thm-caffarelli}) reveals that  the level sets of   solutions are balanced around base points and this  is a  key ingredient that reduces the question on the growth rate to a question on the eccentricities of level sets.

 To show  the eccentricities of level sets are uniformly bounded for large heights, we employ the result   \cite{DSavin} by Daskalopoulos and Savin on the homogeneous equation 
 \be \label{eq-homo-DS}
 \det D^2 v= |x|^{\frac1\alpha -4} 
 .\ee
Note this serves as an asymptotic equation to  \eqref{eq-dualtranslator-1} at high levels.
According to  \cite{DSavin},  if the level set of $v$ has a sufficiently large  eccentricity at certain height, then $v$ must exhibits a non-homogeneous growth rate near the origin. The upshot  is that a large eccentricity at a high level yields  much  larger eccentricities at lower levels.   Considering  an entire solution to  \eqref{eq-dualtranslator-1} (or \eqref{eq-homo-DS}), this  dictates a uniform radial behavior of $u^*$ at  high levels, giving the desired growth rate as in Theorem \ref{thm-equalgrowth}.  

Regarding similar estimates on the growth rate,  we mention \cite[Theorem 1.1 (ii)]{JW}, where they previously showed $u^*$ has a polynomial growth $ |x|^{1+a} \lesssim u^*(x) \lesssim |x|^{1+b}$ for large $|x|$ with some positive constants $a$ and $b$.

  In order to apply the  theory  of  Monge--Amp\`ere equations to $u^*$, 
 one needs to make sure that    sub-level sets of a solution $u^*$  are compactly included in the domain  $\mathrm{int}(\Omega^*)$ of $u^*$. Here, we  recall that  it is not yet known that $\Omega^*=\mathbb{R}^2$.  In the next lemma, we    present   some  preliminary  results  on  the Legendre dual $u^*$.

  \bigskip
  
\begin{lemma}\label{lem-dualsetup}  

Let $\Sigma $ be a translator  under  the $\alpha$-GCF  in the     geometric Alexandrov sense represented  by 
\be\label{eq-sigm-repre}
 \Sigma = \p \left\{ (x_1,x_2,x_3)\in \Omega\times  \mathbb{R}:  x_3 >u(x_1,x_2) \right\}. 
\ee 
Let $u^*$ be the Legendre dual  of $u$ given by 
$u^*(y):= \sup_{x\in \Omega } \left\{\la y,x \ra - u(x)\right\} $
 defined on $\Omega^* := \{ y\in \mathbb{R}^2: u^*(y)<\infty\}$. Then the following statements hold.  
  
 \begin{enumerate}[(a)]
\item  $u^*$ is a convex function on a  convex  set   $\Omega^*$. 

\item  $u^*$  solves  the dual equation $\det D^2 u^* = (1+|y|^2)^{\frac{1}{2\alpha}-2}$ on $\mathrm{int}(\Omega^*)$ in the  Alexandrov sense. 

\item If $0\in \mathrm{int} (\Omega)$, then  every sub-level set $\{ y\in \Omega^*: u^*(y) \le t\}$  is  compactly included in $\mathrm{int}(\Omega^*)$ 
\end{enumerate}
\begin{proof}
(a) is a direct consequence of  the convexity of $y \mapsto \sup_{x\in \Omega} \la y,x\ra$.  (b) and (c)  follow from  Proposition \ref{prop-C51} and  Theorem \ref{thm-section}, respectively. 	
\end{proof}
\end{lemma}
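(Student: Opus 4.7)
The plan is to dispatch the three assertions by direct appeal to the results already established in this section; no new machinery should be required. For part (a), the function $u^*(y) = \sup_{x \in \Omega}\{\langle y,x\rangle - u(x)\}$ is a pointwise supremum of affine functions of $y$, hence convex on its domain of finiteness, and $\Omega^*$ is convex because for $y_1, y_2 \in \Omega^*$ and $\lambda\in[0,1]$ the same supremum at $\lambda y_1 + (1-\lambda)y_2$ is dominated pointwise in $x$ by $\lambda(\langle y_1,x\rangle - u(x)) + (1-\lambda)(\langle y_2,x\rangle - u(x))$, so $u^*$ remains finite there. Part (b) is already the content of Proposition \ref{prop-C51}, so I would simply cite it.

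For part (c), I would apply Theorem \ref{thm-section}(b) after choosing an appropriate base point. Since $0 \in \mathrm{int}(\Omega)$, the function $u$ is locally Lipschitz near the origin and $\partial u(0)$ is nonempty; fix any $y_0 \in \partial u(0)$. By the Legendre pairing, $y_0 \in \partial u(0)$ is equivalent to $0 \in \partial u^*(y_0)$, and the defining subgradient inequality $u(x) \ge u(0) + \langle y_0, x\rangle$ on $\Omega$ yields $u^*(y_0) \le -u(0) < \infty$, so $y_0 \in \Omega^*$. The sub-level set $\{y \in \Omega^* : u^*(y) \le t\}$ then coincides with the section $S^{u^*}_{t - u^*(y_0), y_0, 0}$ based at $(y_0, 0)$, and Theorem \ref{thm-section}(b) applied with $y' = y_0$ and $x' = 0 \in \mathrm{int}(\Omega)$ gives the desired compact inclusion in $\mathrm{int}(\Omega^*)$.

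The only point that warrants care, and the main potential obstacle, is verifying the hypothesis of Theorem \ref{thm-section}(b) that $y_0 \in \mathrm{int}(\Omega^*)$ rather than on $\partial \Omega^*$. If this creates friction I would bypass it by repeating the dichotomy argument from inside the proof of Theorem \ref{thm-section}(b) directly on sub-level sets: the $|y_j|\to \infty$ alternative is ruled out by exhibiting $\gamma \omega \in \Omega$ for any limiting unit direction $\omega$ (using $0 \in \mathrm{int}(\Omega)$) to force $u^*(y_j) \ge \tfrac{\gamma}{2}|y_j| - u(\gamma \omega) \to \infty$, while the $y_j \to \partial \Omega^*$ alternative is ruled out by Theorem \ref{thm-section}(a). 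Since the proof of Theorem \ref{thm-section}(b) in fact performs exactly this reduction to sub-level sets via the translation $\tilde u(x) = u(x + x')$, part (c) at worst amounts to restating the final step of that argument.
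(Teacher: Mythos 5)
Your proposal is correct and follows the paper's proof: (a) by noting that $u^*$ is a supremum of affine functions, (b) by citing Proposition \ref{prop-C51}, and (c) by appealing to Theorem \ref{thm-section}. You are right to flag $y_0 \in \mathrm{int}(\Omega^*)$ as the one non-automatic hypothesis if one invokes Theorem \ref{thm-section}(b) literally, but, as your fallback observes, the proof of that theorem reduces by translation to (and directly establishes) the compactness of every sub-level set of $u^*$ whenever $0 \in \mathrm{int}(\Omega)$, which is precisely assertion (c).
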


\smallskip

 Let us  introduce some   definitions and notations.

\begin{definition}[Doubling measure] \label{def-doubling}
A measure $\mu $ on $\Omega \subset  \mathbb{R}^n$ is said to  satisfy the doubling condition if there is a constant $C>1$ such that $ \mu (E) \le C\mu\left(\tfrac 12 E\right)  $ for any ellipsoid $E\subset \Omega$. Here, for a given ellipsoid $E$ centered at $x$,   we denote \be
 \tfrac 12 E:=\left\{ \tfrac 12(y-x)+ x \in\Omega \,:\, y\in E  \right\}, 
 \ee
 and    $C$  is called a \textit{doubling constant}.  
\end{definition}

\begin{remark}\label{remark-doubling}
It should be noted that  for $\alpha<1/3$,  a measure $(1+|x|^2)^{\frac{1}{2\alpha}-2} dx$  on $\mathbb{R}^2$  satisfies the doubling condition  and the doubling constant depends on $\alpha$; we refer to \cite[Section 3]{JW} for the proof. 
\end{remark}
\medskip

  The following theorem is a well-known result by Caffarelli about   geometric properties of sections.   We  give a proof   using a lemma in \cite{Caffarelli-boundary} as we  could not find it in the literature.
\begin{theorem}[Caffarelli]    \label{thm-caffarelli}
Let  $f(x) \,dx$ be  a doubling measure on an open set $\mathcal{O} \subset \mathbb{R}^2 $, and let    $v$ be  an Alexandrov solution to  $\det D^2v= f$   on  $\mathcal{O}  \subset \mathbb{R}^2$.  Suppose that  for $x_0\in \mathcal{O} $,  $ p\in \p v(x_0)$   and $t>0$, the   section $  S_{t,x_0} $
  is compactly included in $\mathcal{O}$. Then, there is a  symmetric  matrix $A_t$ with $\det A_t=1$ such that 
  \bea\label{eq-centerellipsoid} 
  k_0^{-1} A_t B_r \subset S_{t,x_0}-x_0  \subset k_0 A_t B_r \qquad \text{for} \quad r= t \left[\int_{S_{t,x_0}}f(x)\, dx\right]^{-1/2}.
  \eea 
Here, a constant $k_0>0$ depends only on  the doubling constant of the measure $f(x) \, dx$.

\begin{proof} By subtracting an affine function from $v$, we may assume  that 
\be
\hbox{$p=0\in \mathbb{R}^2$,\quad $  v(x_0)=\inf_\mathcal{O}  v= -t$, \quad and \quad $S_{t,x_0}=\left\{x\in\mathcal{O}\,: v (x)\le 0\right\}$.}
\ee 
Moreover,   we may use   John's lemma  and  apply an affine transform $x\mapsto Ax +b$ for some  symmetric matrix $A$ with $\det A=1$ and $b\in \mathbb{R}^2$ to assume that
 \bea\label{eq-johns} 
 B_R \subset S_{t,x_0}  \subset B_{2R}\quad \text{ for some } R>0.
 \eea  
 Here, we note that $ S_{t,x_0}  $ with center of mass $0$   is compactly included in $\mathcal{O}$  .

 		By \cite[Lemma 3]{Caffarelli-boundary}, there are  constants $c_1>1$ and $\lambda\in(0,1)$ depending   only on the  doubling constant such that 	
	\bea \label{eq-431}
	 c_1^{-1} t^2  \le R^2  \int_{S_{t,x_0}} f(x) dx \le c_1 t^2  \quad\hbox{and} \quad B_{\frac{t}{c_1R}} \subset \p v (\lambda S_{t,x_0}) \subset B_{\frac{c_1 t}{R}}.
	 \eea  
 	%and $B_{\frac{t}{c_1R}} \subset \p u (\lambda S_{t,x_0}) \subset B_{\frac{c_1 t}{R}}$. 	
Since   $  v(x_0)=\inf_\mathcal{O}  v$,  the second assertion above implies that   
\be \label{eq-x_0-ldS}
x_0 \in \lambda S_{t,x_0}.
\ee

\smallskip

 Next, we claim that 
  for every $y\in \partial S_{t,x_0}$,  
  \be\label{eq-dist-S-lS}
  \dist (y,\lambda \partial S_{t,x_0})\ge (1-\lambda)R.
  \ee
   Indeed, since $S_{t,x_0}$ is convex and it contains $B_R$, there exists $\nu' \in \mathbb{S}^1 $ such that 
   \be
 \sup_{z\in \partial S_{t,x_0}} \langle z , \nu' \rangle = \langle y ,\nu '\rangle   \quad\hbox{    and \quad $\langle y, \nu'\rangle \ge R$.}
  \ee 
  This implies that 
\begin{align}
\inf_{z'\in \lambda \partial S_{t,x_0}}| y-z'|&\ge \inf_{z' \in \lambda \partial S_{t,x_0}} \langle y-z' , \nu' \rangle \\
&= \langle y,\nu'\rangle - \lambda \sup_{z\in \partial S_{t,x_0}}\langle z,\nu' \rangle=(1-\lambda)\langle y,\nu '\rangle \ge (1-\lambda)R.
\end{align}
This yields \eqref{eq-dist-S-lS}.

 Using   \eqref{eq-x_0-ldS} and  \eqref{eq-dist-S-lS}, we find   
a constant $c_2>1$ depending on $\lambda$ such that 
\be B_{\frac{R}{c_2} } \subset  S_{t,x_0}-x_0  \subset  B_{c_2R}.
\ee  
In light of   \eqref{eq-431} and the definition of $r$,  we deduce that
\be 
c_3^{-1}B_{ r} \subset S_{t,x_0} -x_0 \subset  c_3B_{ r}  ,
\ee
where $c_3=c_1^{\frac{1}{2}}c_2$. This finishes the proof. 
\end{proof}

\end{theorem}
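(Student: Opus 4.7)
The plan is to reduce via John's lemma to a section that is comparable to a round ball, then combine a classical lemma of Caffarelli for doubling Monge--Amp\`ere measures with a centering argument that locates $x_0$ in a universal interior portion of the section.

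First I would subtract the supporting affine function $x \mapsto v(x_0) + p\cdot(x-x_0) + t$ from $v$, so that we may assume $v(x_0) = -t = \inf v$, the new $p$ is $0$, and $S_{t,x_0} = \{v \le 0\}$; the Monge--Amp\`ere measure is unchanged by this subtraction. John's lemma applied to the convex compact section then yields a symmetric matrix $A_t$ with $\det A_t = 1$, a point $c$, and a radius $R>0$ such that after the volume-preserving affine change $y = A_t^{-1}(x-c)$ the transformed section $\widetilde S$ satisfies $B_R \subset \widetilde S \subset 2B_R$. The transformed density $f(A_t y + c)$ inherits the same doubling constant, and the problem reduces to showing that the transformed base point $\widetilde x_0 = A_t^{-1}(x_0-c)$ lies inside a universal shrinking $\lambda \widetilde S$.

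The analytic input is Caffarelli's classical lemma for doubling Monge--Amp\`ere measures: there exist universal constants $c_1>1$ and $\lambda \in (0,1)$ such that
\begin{equation*}
c_1^{-1} t^2 \;\le\; R^2 \int_{S_{t,x_0}} f(x)\,dx \;\le\; c_1 t^2,
\qquad
B_{t/(c_1 R)} \;\subset\; \partial v(\lambda S_{t,x_0}) \;\subset\; B_{c_1 t/R}.
\end{equation*}
The first two-sided bound identifies $R$ with the quantity $r$ of the statement up to a universal factor. Since $0 = p \in \partial v(x_0)$ after the affine subtraction and the image $\partial v(\lambda S_{t,x_0})$ contains a neighborhood of the origin, there is some $x^* \in \lambda S_{t,x_0}$ with $0 \in \partial v(x^*)$, so that $x^*$ is also a minimizer of $v$; strict convexity of Alexandrov solutions with doubling densities then forces $x^* = x_0$ and hence $x_0 \in \lambda S_{t,x_0}$.

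Once $\widetilde x_0 \in \lambda \widetilde S$ and $B_R \subset \widetilde S \subset 2B_R$, a short support-line estimate gives $\dist(\widetilde x_0, \partial \widetilde S) \ge (1-\lambda) R$, so $B_{(1-\lambda) R} \subset \widetilde S - \widetilde x_0 \subset 3 B_R$. Pulling this back by $A_t$ and using $R \sim r$ produces the desired two-sided containment $k_0^{-1} A_t B_r \subset S_{t,x_0} - x_0 \subset k_0 A_t B_r$ with $k_0$ depending only on $c_1$ and $\lambda$, and hence only on the doubling constant. The main obstacle I anticipate is the centering step: locating $x_0$ inside $\lambda S_{t,x_0}$ requires the quantitative Caffarelli lemma together with the uniqueness of the minimizer, rather than just the fact that $x_0$ minimizes $v$, because the shrinking $\lambda S_{t,x_0}$ is taken about the centroid of the section rather than about $x_0$ itself.
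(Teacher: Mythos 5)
Your proposal follows essentially the same route as the paper: reduce by subtracting the supporting affine function, normalize with John's lemma, invoke Caffarelli's doubling lemma (\cite[Lemma 3]{Caffarelli-boundary}) to compare $R$ with $r$ and to locate a minimizer inside $\lambda S_{t,x_0}$, and then translate the John ellipsoid to be centered at $x_0$. The only place you go beyond the paper's wording is in justifying $x_0 \in \lambda S_{t,x_0}$: the paper simply states this follows from $v(x_0) = \inf v$ together with $B_{t/(c_1 R)} \subset \partial v(\lambda S_{t,x_0})$, whereas you correctly observe that one needs the minimizer to be unique (via strict convexity of Alexandrov solutions with doubling densities) to conclude $x^* = x_0$ rather than just that $\lambda S_{t,x_0}$ contains \emph{some} minimizer; this is a legitimate filling-in of a terse step rather than a different argument.
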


\smallskip

\begin{remark}\label{remark-fol}
 
In Theorem \ref{thm-caffarelli}, a symmetric matrix $A_t$ can be decomposed as  $A_t=Q\Lambda Q^{T}$ with    a diagonal matrix $\Lambda $  and an orthogonal matrix $Q$. So    a symmetric matrix $A_t $  in \eqref{eq-centerellipsoid}  can be replaced by $Q\Lambda $  since $Q^{T}B_r=B_r$.
% polar decomposition 	
\end{remark}

Finally, we recall some   notations from  \cite{DSavin}. 

\begin{definition}\label{def-eccen-section}
(i) For a given section $S_{t,x_0}$ and a   matrix $A_t $ with $\det A_t =1$  (not necessarily symmetric), we write
\be
S_{t,x_0} \sim A_t \qquad  \text{ if }\quad \text{the condition \eqref{eq-centerellipsoid} holds. }\
\ee
In this case, we   say that  the eccentricity of the section $S_{t,x_0}$ is proportional to $|A_t|$. Here, $|A|=\sup_{|v|=1} |Av| $. See Remark \ref{rmk-def67}.  As $k_0$ in \eqref{eq-centerellipsoid} depends on the doubling constant of $f(x)\, dx$, we write $\sim_f$ in place of $\sim$ whenever an indication is needed.

\smallskip

(ii) For two regions $\Omega_1$  and $\Omega_2$ in $\mathbb{R}^2$ and a constant $0<\delta<1$, we write 
\be
\Omega_2 \in \Omega_1 \pm \delta\qquad  \text{ if }\quad (1-\delta)\Omega_1\subset \Omega_2 \subset (1+\delta ) \Omega_1 .
\ee
 
\end{definition}
\medskip

\begin{remark}\label{rmk-def67} From the definition (i), it is possible that $S_{t,x_0} \sim A$ and $S_{t,x_0}\sim A'$ for two different unimodular matrices. In this case, however, $C^{-1}|A'|\le|A|\le C |A'|$ for some uniform $C$ which depends only on the doubling constant. For this reason, we denoted that the eccentricity is `proportional' to $|A_t|$. Indeed, the eccentricity  of   an open bounded   convex set $S\subset \mathbb{R}^2$ can be uniquely defined as $ \det(T)^{-1/2} |T|,$ for normalizing affine transform $T$; see \cite[Definition 4.1]{MA-figalli-book} for the normalization.

\end{remark}

\smallskip

 The following key result roughly shows that a sufficiently high eccentricity is unstable in the sense that it increases as the level decreases. The proof is an application of Lemma 3.1 and 3.2 in \cite{DSavin}. 

\begin{theorem}\label{thm-controleccen} For $\alpha \in (0,1/4)$ and a given measure $f(x)\, dx$ with doubling constant $\mu_0<\infty$, there exist constants $\e_0>0$, $C_0<\infty $, and $\tau_0\in (0,1)$ depending on $\alpha$ and $\mu_0$ with the following significance. Let $v$ be an Alexandrov solution to $\det D^2 v =  f(x) $ on an open set $\mathcal{O}$    containing the origin. Suppose $v(0)=0$, $0 \in \partial v(0)$, $S_{1} \subset\subset \mathcal{O}$  and 
\begin{equation}
    |f(x)-|x|^{\frac{1}{\alpha }-4}|\le \e _0 (1+|x|^{\frac 1 {\alpha} -4 }) \quad \text{ on } S_1.
\end{equation}
If $S_1\sim_f A_1$ for some $|A_1|\ge C_0$, then $|A_{\tau_0}|\ge 2 |A_1|$ for all $A_{\tau_0}$ with $S_{ \tau_0 }\sim_f A_{\tau_0}$. 
 
\begin{proof}Let us assume $S_1 \sim A_1$ so that \eqref{eq-centerellipsoid} is satisfied for $r$ as in the equation with $t=1$. From Remark \ref{remark-fol}, we may choose 
\begin{equation}
A_{1} = P
   \begin{pmatrix} b& 0 \\ 0 &  1/b
   \end{pmatrix},
\end{equation}
for some orthogonal matrix $P$ and $b=|A_1|\ge1$. Then $
     \hat  v(x):= v\left(A_1  r  x\right)$ is an Alexandrov solution to $ \det D^2 \hat v (x)= r ^4 f(A_1rx)$. First, observe that $rb \ge r_0 $ for some positive constant $r_0=r_0(\alpha,\mu_0)$. Indeed, a direct computation which uses \eqref{eq-centerellipsoid} yields  $ cr\le r^{-1}(rb)^{2-\frac{1}{2\alpha}} \le Cr $ for some positive constants $c=c(\alpha,\mu_0)$ and $C=C(\alpha,\mu_0)$. This implies $(rb)^{-\frac1{2\alpha}}\le r^{-2} (rb)^{2-\frac{1}{2\alpha}}\le C$ and the bound follows. Next, note that $|A_1|=b\ge C_0$ and  
\begin{align}
&      (rb)^{4-\frac1\alpha}f (A_1rx) -  |x_1|^{\frac{1}\alpha -4 }\\
 &=(rb)^{4-\frac1\alpha} (f(A_1rx)-|A_1rx|^{\frac{1}\alpha -4})+(|x_1|^2+|x_2|^2 {b^{ -4}})^{\frac{1}{2\alpha }-2} - |x_1|^{\frac{1}{\alpha}-4} \\
&\le  (rb)^{4-\frac1\alpha}  \e_0 (1+|A_1rx|^{\frac1\alpha-4} ) + \e_1(b)\le  {  \e_0(r_0)^{4-\frac1{\alpha}}(1+ k_0^{\frac{1}{\alpha}-4}) + \e_1(b),}
\end{align}     
where $\e_1(b)\to 0$ as $b \to \infty$. This shows that for every $\e>0$ there is small $\e_0$ and large $C_0$ so that $\det D^2\hat v = r^{\frac1\alpha}b^{\frac1\alpha -4} \tilde f(x)$ with $|\tilde f(x) -|x_1|^{\frac{1}{\alpha}-4}|\le \e $ on $S^{\hat v}_1$. 
By \cite[Lemma 3.2]{DSavin}, for a given $\theta_0>0$, there exist $\e_0>0$, $C_0<\infty$   and $t_1=t_1\in (0,1)$ depending on $\theta_0$, $\alpha$, $\mu_0$,  such that 
\bea S^{\hat v}_{t_1} \in B_0 D_{t_1} (\Gamma \pm \theta_0), \text{ where } \Gamma:= \{(x_1, x_2)\subset \mathbb{R}^2\,:\,  x_1^{\frac1\alpha -2} +x_2^2 <1 \}. \eea
Here, $ D_t := \begin{pmatrix} t^{\frac{\alpha }{1-2\alpha}} &  0  \\ 0 &  t^{\frac12}   \end{pmatrix}$ and $B_0$ is a lower triangular matrix  $B_0:= \begin{pmatrix} b_{0,11}&  0  \\ b_{0,21} &   b_{0,22}
   \end{pmatrix}$  with bounded entries $C^{-1}\le |b_{0,ii}| \le C$ and $|b_{0,21}|\le C$ for some $C=C(\alpha,\mu_0)$.  
\medskip

As a next step, consider $ v_1 (x):= t_1^{-1}\hat v( B_0D_{t_1} x)$ and observe that $S^{v_1}_1\in \Gamma \pm \theta_0$.  
We aim to apply \cite[Lemma 3.1]{DSavin} several but definite amount, say  $k=k(\alpha,\mu_0)\in \mathbb{N}$, times to $v_1$. Let $k$ be an undetermined integer that will be fixed at the end. As a consequence, there exist $\theta_0=\theta_0(\alpha,\mu_0)>0$,  $\e_0=\e_0(\alpha,\mu_0, k)>0$ and $C_0=C_0(\alpha,\mu_0,k)<\infty $  such that 
\bea  S^{\hat v}_{t_1 t_0^k}\in B_k D_{t_1 t_0^k}(\Gamma \pm \theta_0 t_0^{k\delta}). \eea
Here, $B_k$ are  lower triangular matrices  $B_k:= \begin{pmatrix} b_{k,11}&  0  \\ b_{k,21} &   b_{k,22}
   \end{pmatrix}$  with bounded entries $C^{-1}\le |b_{k,ii}| \le C$ and $|b_{k,21}|\le C$ for some $C=C(\alpha,\mu_0)$, and $t_0 \in (0,1)$, $\delta>0$ are universal constants appearing in  \cite[Lemma 3.1]{DSavin}. The smallness of $\theta_0=\theta_0(\alpha,\mu_0)$ is needed to guarantee the universal bound on the entries of $B_k$. See the proof of \cite[Proposition 3.3]{DSavin} for more details on this argument.

 In terms of the original function $ v$, this amounts to say that  
 \be
 (r A_{1}B_kD_{t_1t_0^k} )^{-1} S_{t_1t_0^k}  \in   \Gamma \pm \theta_0  t_0^{k\delta}.\ee  Namely, the eccentricity of $S_{t_0t_1^k}$ is proportional to $  {\det (A_{1} B_k D_{t_0t_1^k})^{-\frac12}}    {|A_{1} B_k D_{t_0t_1^k}|}$. 
   Since   $\det (A_{1} B_k D_{t})$ is proportional to $\det D_{t}={t}^{    \left(\frac{\alpha}{1-2\alpha} +\frac12 \right)} $,   it holds that for $t=t_1 t_0^k$
 \bea 
 {\det (A_{1} B_k D_t)^{-\frac12}}    {|A_{1} B_k  D_t|} &=   {\det (A_{1} B_k D_t)^{ -\frac12}}   {\left |\begin{pmatrix} b &0 \\ 0& b^{-1} \end{pmatrix} B_k D_t\right|}  \\
&\ge c_1\,  {t}^{     -\frac12 \left(\frac{\alpha}{1-2\alpha} +\frac12 \right)}\left |\begin{pmatrix} b &0 \\ 0& b^{-1} \end{pmatrix} B_k \begin{pmatrix} t^{\frac{\alpha }{1-2\alpha}}&0 \\ 0& t^{\frac12  }\end{pmatrix}\right| \\
&\ge c_1\, b\, {t}^{     -\frac12 \left(\frac{\alpha}{1-2\alpha} +\frac12 \right)}t^{ \frac{\alpha }{1-2\alpha}} =  c_1\, b\, {t}^{     \frac12 \left(\frac{\alpha}{1-2\alpha} -\frac12 \right)}, 
\eea  
for some constant $c_1=c_1(\alpha,\mu_0)>0$ . As $\frac{\alpha}{1-2\alpha} -\frac12 <0$, we may fix $k=k(\alpha,\mu_0)$  so that if $S_{t_1t_0^k} \sim A'$ then $|A'|\ge 2b=2|A_1|$. We choose $\tau_0=t_1 t_0^k$ and this finishes the proof.
\end{proof}
\end{theorem}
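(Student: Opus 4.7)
The plan is to reduce the problem to the Daskalopoulos--Savin analysis of the model degenerate equation $\det D^2 w = |x_1|^{\frac{1}{\alpha}-4}$ by performing a normalization that takes advantage of the already-large eccentricity $b := |A_1|$. By Remark \ref{remark-fol} I may assume $A_1 = \mathrm{diag}(b, b^{-1})$ after a rotation. With $r$ the radius provided by Theorem \ref{thm-caffarelli} applied to $S_1$, I would define the normalized function $\hat v(x) := v(A_1 r x)$, so that $S_1^{\hat v}$ is comparable to the unit ball and $\hat v$ solves
\be
\det D^2 \hat v(x) = r^4 f(A_1 r x).
\ee
A short calculation shows that $|A_1 r x|^2 = r^2 b^2 x_1^2 + r^2 b^{-2} x_2^2$ is dominated by its first term once $b$ is large, so after dividing $\hat v$ by the appropriate power of $rb$ the equation becomes $\det D^2 \tilde v = \tilde f(x)$ with $|\tilde f - |x_1|^{1/\alpha-4}| \le \eta$, where $\eta$ is governed by $\eps_0$ and $b^{-1}$. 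One also verifies a lower bound $rb \ge r_0(\alpha, \mu_0)$, which is automatic from the size relations in Theorem \ref{thm-caffarelli}. Making $\eps_0$ small and $C_0$ large thus brings the normalized equation within the scope of the degenerate model as tightly as required.

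The second step is to apply the Daskalopoulos--Savin machinery. Their Lemma 3.2 produces a fixed small section $S^{\tilde v}_{t_1}$ lying in a neighborhood $B_0 D_{t_1}(\Gamma \pm \theta_0)$ of the model convex body $\Gamma = \{x_1^{1/\alpha-2} + x_2^2 < 1\}$, for a universal diagonal scaling $D_t$ and a lower-triangular $B_0$ with entries uniformly bounded above and below. Iterating their Lemma 3.1 a finite number $k$ of times (depending only on $\alpha$ and $\mu_0$) then yields
\be
S^{\hat v}_{t_1 t_0^k} \in B_k D_{t_1 t_0^k}\bigl(\Gamma \pm \theta_0 t_0^{k\delta}\bigr),
\ee
with each $B_k$ lower-triangular and uniformly bounded. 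For the iteration to remain well-posed at each step, $\theta_0$ must be fixed small in terms of $\alpha$ and $\mu_0$ so that the entries of the matrices $B_j$ do not degenerate along the iteration.

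Finally I would unwind the affine rescaling. The eccentricity of the section of the original $v$ at height $t = t_1 t_0^k$ is proportional to $\det(A_1 B_k D_t)^{-1/2} |A_1 B_k D_t|$. Since $B_k$ has controlled entries and $\det D_t = t^{\alpha/(1-2\alpha) + 1/2}$, this quantity is bounded below by a constant times $b\, t^{(1/2)(\alpha/(1-2\alpha) - 1/2)}$, using that the top-left entry of $A_1 B_k D_t$ is $\sim b\, t^{\alpha/(1-2\alpha)}$. Because $\alpha < 1/4$ makes the exponent $\alpha/(1-2\alpha) - 1/2$ strictly negative, this quantity grows without bound as $k$ increases; picking $k = k(\alpha, \mu_0)$ large enough forces the eccentricity to at least double, and one sets $\tau_0 := t_1 t_0^k$. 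The main obstacle, which I expect to be the most delicate part of the book-keeping, is the order of quantifiers: $k$ must be chosen first (based solely on $\alpha$ and $\mu_0$, using the negativity of the above exponent to guarantee doubling), and only afterwards can $\theta_0$, $\eps_0$, and $C_0$ be calibrated so that the Daskalopoulos--Savin iteration can be run $k$ consecutive times without the perturbative errors accumulating beyond their allowed range.
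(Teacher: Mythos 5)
Your proposal reproduces the paper's proof essentially verbatim: the same normalization $\hat v(x)=v(A_1 r x)$ with the same lower bound $rb\ge r_0(\alpha,\mu_0)$, the same reduction to the near-model equation $|\tilde f-|x_1|^{1/\alpha-4}|\le\eps$, the same application of Daskalopoulos--Savin Lemma~3.2 followed by $k$ iterations of Lemma~3.1, and the same concluding estimate that the eccentricity of $S_{t_1t_0^k}$ is $\gtrsim b\,t^{\frac12(\frac{\alpha}{1-2\alpha}-\frac12)}$ with a negative exponent since $\alpha<1/4$. One small slip in your final bookkeeping: the paper fixes $\theta_0=\theta_0(\alpha,\mu_0)$ first (small enough that the matrices $B_j$ stay uniformly bounded through the iteration), which determines $t_1$ via Lemma~3.2, then chooses $k$, and only then calibrates $\eps_0,C_0$ depending on $k$ --- one cannot choose $k$ before $\theta_0$ because $t_1$, which enters the level $\tau_0=t_1t_0^k$, itself depends on $\theta_0$.
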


\bigskip

Now, we prove the main theorems in this paper.   
   
\begin{proof}[Proof of Theorem \ref{thm-equalgrowth}]
  We first prove under the assumption  $0\in \mathcal{O}$, $v(0)=0$ and $0\in \partial v(0)$. In view of Theorem \ref{thm-caffarelli}, it suffices to show that the eccentricity is uniformly bounded for large $l\ge l_1$. i.e. we show if $S_l \sim A_l $ for $l\ge l_1$, then $|A_l| <C_0$, where $C_0$ is the universal constant appearing in Theorem \ref{thm-controleccen} and $l_1$ is a constant depending on $v$. 
  
  Observe $v_\lambda(x)= \lambda^{-\frac{1}{2\alpha} }v(\lambda x)$ solves $\det D^2 v_\lambda(x) = f_\lambda(x):= \lambda^{-\frac{1}{\alpha}+4}f(\lambda x)$. For some $\eps>0$ to be chosen later, if $f(x)$ satisfies \eqref{eq-RHS.conv2}, then there exists $\lambda_0=\lambda_0(f,\varepsilon)$ such that 
  \[|f_{\lambda}(x)-|x|^{\frac{1}{\alpha}-4} | \le 2\varepsilon (1+|x|^{\frac{1}{\alpha}-4}), \quad \text{ on }x\in\mathbb{R}^2,\]
for all $\lambda \ge \lambda_0$. Using $\varepsilon_0$ obtained in Theorem \ref{thm-controleccen},  let us choose $\varepsilon=\varepsilon_0/2$. By Theorem \ref{thm-controleccen}  there is universal $\tau_0\in(0,1)$ such that if $S_{l}\sim A_{l}$ for some $l\ge \lambda_0^{1/{2\alpha}}$  with $|A_l| \ge C_0$, then $|A_{\tau_0 l }| \ge 2|A_l|$. Assume on the contrary to our initial goal, suppose there is a sequence $l_i \to \infty$ such that the eccentricity of $S_{l_i}
  \sim A_{l_i}$ with $|A_{l_i}| \ge C_0$. Then $|A_{\tau_0^k l_i}| \ge 2^k C_0$ whenever $S_{\tau_0^k l_i} \sim A_{\tau_0^k l_i}$ and ${\tau_0^{k-1} l_i}\ge l_0$. Since $l_i \to \infty$, this implies for every $M<\infty$ there is $l\in (\tau_0l_0, l_0)$ and unimodular $A_{l}$ such that $S_l \sim A_l$ and $|A_l|\ge M$. This is impossible as otherwise $S_{l_0}$ contains a sequence of ellipsoids with a uniform lower bound on inside area $4^{-1} \mathrm{Area}(S_{\tau_0l_0})$ (here $4^{-1}$ comes from John's ellipsoid)  while having unbounded eccentricities. This proves the statement.

In general case, we fix a point $x_0 \in \mathcal{O}$ such that $0 \in \partial v(x_0)$, which is possible by the compactness assumption on sub-level sets. Let us   define $\hat v (x) := v(x+x_0)-v(x_0)$ to satisfy $\hat v(0)=0$ and $0\in \p \hat v(0)$.  It suffices to show \eqref{eq-CR} for $\hat v$.
 Note $\hat v$ is solves  $ \det D^2 \hat  v(x) =f(x+x_0)$ on $\mathcal{O}-x_0$,
whose right hand side has the same doubling constant. Observe 
 if $f(x+x_0)$ is viewed as a function of $x$, then it also satisfies the condition \eqref{eq-RHS.conv2}, and  $\hat v_\lambda (x)= \lambda ^{-\frac{1}{2\alpha}} \hat v(\lambda x)$ satisfies $\det D^2 \hat v_\lambda =f_\lambda(x+\lambda^{-1} x_0)$.  Therefore, the same proof works except  that, at this time, $l_0$ is a constant of $\alpha$, $f$, and $x_0$.  
\end{proof}

\bigskip

\begin{proof}[Proof of Theorem \ref{thm-smooth}]
Suppose that     $\Sigma $  is represented  by  \eqref{eq-sigm-repre} 
for   a convex function $u$  over a convex   domain  $\Omega\subset \mathbb{R}^2$. Here, we may assume that $0\in \mathrm{int} (\Omega)$.    
By Lemma \ref{lem-dualsetup} and Theorem \ref{thm-equalgrowth}, the Legendre dual  $u^*$ of $u$ is an   entire solution to   the dual equation \eqref{eq-dualtranslator-1} on $\mathbb{R}^2$ in the Alexandrov sense.  Furthermore,     it holds   from  \eqref{eq-CR}  that
\be\label{eq-smooth-dual-g-r}
C^{-1} |y|^{\frac{1}{2\alpha}} \le u^*(y) \le C |y|^{\frac{1}{2\alpha}},
\ee for all  $|y|\ge R$,
 with some   $R>0$ and a universal constant  $C$. 
\smallskip

 For large $l>0$,  restrictions of $u^*$   can be viewed as  an Alexandrov solution to   \eqref{eq-dualtranslator-1}  on  $ Z_l:=\{y\in\mathbb{R}^2: u^*(y) <  l\}$ with the  Dirichlet boundary condition:   $u^* =l$ on  $\p Z_l$. 
  Thus, we deduce that   $u^*$ is    smooth    strictly convex   on $\mathbb{R}^2$  by employing  the interior regularity of the Monge--Amp\`ere equation;  see \cite{caffarelli1990interior, MA-Gutierrez, MA-figalli-book} for instance. Therefore, we conclude that  $ u = (u^*)^*$  is also an entire  smooth  strictly convex    solution  to  \eqref{eq-translatorgraph} with the specific growth  in   \eqref{eq-440} in light of \eqref{eq-smooth-dual-g-r}.
\end{proof}

\bigskip

\begin{proof}[Proof of Corollary \ref{cor-quantitativestability}]
By an inductive application of Theorem \ref{thm-controleccen}, there exists $l_0$, $\tau_0$ and $C_0$ such that if $S_{l'}\sim A_{l'}$ with $|A_{l'}|\le  M$ for some $l'\ge l_0$ and $M\ge C_0$, then there hold  $|A_{\tau_0^{-k} l'}|\le M$ for all $S_{\tau_0^{-k} l'} \sim A_{\tau_0^{-k} l'}$ and  positive integers $k$. This proves the statement for discrete levels $\tau_0^{-k} l'$ for constant $C_1=1$. For every other $l\ge l'$, there is a unique $k'\ge 0 $ such that $ l\in [\tau_0^{-k'} l', \tau_0^{-(k'+1)} l')$. In view of trivial inclusion 
$ S_{\tau_0^{-k'} l'}\subset S_{l}\subset S_{\tau_0^{-(k'+1)} l'}$, we infer that the statement holds for a larger universal $C_1\ge 1$. 
\end{proof}

\bigskip

\subsection*{Acknowledgments}
BC thanks KIAS for the visiting support and also thanks University of Toronto where the research was initiated. BC and KC were supported by  the National Research Foundation(NRF) grant funded by the Korea government(MSIT) (RS-2023-00219980). BC has been supported by NRF of Korea grant No. 2022R1C1C1013511, POSTECH Basic Science Research Institute grant No. 2021R1A6A1A10042944, and a POSCO Science Fellowship. KC has been supported by the KIAS Individual Grant MG078902, a POSCO Science Fellowship, and an Asian Young Scientist Fellowship. SK has been partially supported by NRF of Korea grant No. 2021R1C1C1008184. We are grateful to Liming Sun for fruitful discussion. We would like to thank Simon Brendle, Panagiota Daskalopoulos, Pei-Ken Hung, Ki-ahm Lee, Christos Mantoulidis, Connor Mooney for their interest and insightful comments.

 \bibliography{GCF-ref.bib}
\bibliographystyle{abbrv}

\end{document}